\newtheorem{lemma}{Lemma}
\newtheorem{theorem}{Theorem}
\newtheorem{corollary}{Corollary}
\newtheorem{conjecture}{Conjecture}
\newcommand{\subqed}{{\ifmmode q.e.d. \else\unskip\nobreak\hfil
\penalty50\quad\null\nobreak\hfill $\blacksquare$ \parfillskip=0pt
\finalhyphendemerits=0\par\fi}}
\newcommand{\ml}{l\kern-0.55mm\char39\kern-0.3mm}
\begin{document}

\begin{frontmatter}

\title{On the cyclic coloring conjecture}


\author{Stanislav Jendro{\ml}}
\ead{stanislav.jendrol@upjs.sk}

\author{Roman Sot\'{a}k}
\ead{roman.sotak@upjs.sk}

\address{Institute of Mathematics, P.~J.~\v Saf\' arik University, Jesenn\'a 5, 040 01 Ko\v sice, Slovakia}

\begin{abstract}
A  cyclic coloring of a plane graph $G$ is a coloring of its vertices such that vertices incident with the same face have distinct colors. The minimum number of colors in a cyclic coloring of a plane graph $G$ is its cyclic chromatic number 
$\chi_c(G)$. Let $\Delta^*(G)$ be the maximum face degree of a graph $G$.

In this note we show that to prove the Cyclic Coloring Conjecture of Borodin from 1984, saying that every connected plane graph $G$ has $\chi_c(G) \leq \lfloor \frac{3}{2}\Delta^*(G)\rfloor$, it is enough to do it for subdivisions of simple $3$-connected plane graphs. 

We have discovered four new different upper bounds on $\chi_c(G)$ for graphs $G$ from this restricted family; three bounds of them are tight. As corollaries, we have shown that the conjecture holds for subdivisions of plane triangulations, simple $3$-connected plane quadrangulations, and simple $3$-connected plane pentagulations with an even maximum face degree, for regular subdivisions of simple $3$-connected plane graphs of maximum degree at least 10, and for subdivisions of simple $3$-connected plane graphs having the maximum face degree large enough in comparison with the number of vertices of their longest paths consisting only of vertices of degree two.

\end{abstract}

\begin{keyword}
 plane graphs, cyclic coloring, vertex-coloring

\MSC[2010] 05C15 \sep 05C10

\end{keyword}

\end{frontmatter}

\section{Introduction}
Throughout this note we use graph theory terminology according to the books \cite{MohTho-01} and \cite{West-01}. However, we recall the most frequent notions. In this note,  $G$ is a connected plane graph with vertex set $V(G)$, edge set $E(G)$, and face set $F(G)$. In what follows, $G$ can have multiple edges but no loops, while a {\it{simple}} graph has no multiple edges. The {\it{degree}} of a vertex $v$, denoted by $\deg_G(v)$, is the number of edges incident with $v$. The {\it{degree}} of a face $f$, denoted by $\deg_G(f)$, is the number of vertices incident with $f$. A $k$-face is any face of degree $k$. We use $\Delta(G)$ and $\Delta^*(G)$ to denote the {\it{maximum vertex degree}} and {\it{maximum face degree}} of $G$, respectively. In a graph $G$, a {\it{subdivision}} of an edge $uv$ is the operation of replacing $uv$ with a paths $u,w,v$ through a new vertex $w$. A {\it{subdivision}} of a graph $G$ is a graph obtained by a sequence of subdivisions of edges of $G$.

For a cycle $C$ (in a plane graph $G$) we denote the set of vertices and edges  of $G$ lying inside $C$ and outside $C$ by  
$\mathrm {int}{_G(C)}$ and $\mathrm {ext}{_G(C)}$, respectively. We say that $C$ is a {\it{separating cycle}} if both
$\mathrm {int}{_G(C)}$ and $\mathrm {ext}{_G(C)}$ are not empty.

Let $G$ be a connected plane graph and $C_n$ be a separating cycle of length $n$ in $G$. Let $G_1$ (resp. $G_2$) be a plane graph obtained from $G$ by deleting the exterior (resp. the interior) of $C_n$. Observe that $\Delta^*(G_i) \leq \max\{\Delta^*(G), n\}$, $i\in\{1, 2\}$, where  $n$ is the degree of the face $g_i$ of $G_i$ whose boundary is the cycle $C_n$. The face $g_i$ is not a face of $G$ while any  other face of $G$ is present in $G_1$ or in $G_2$.

A {\it{cyclic coloring}} of a plane graph is a vertex coloring such that any two different {\it{cyclically adjacent}} vertices, i.e. vertices incident with the same face, receive distinct colors. The minimum number of colors needed for a cyclic coloring of a graph $G$, the {\it{cyclic chromatic number}}, is denoted by $\chi_c(G)$. This concept was introduced by Ore and Plummer \cite{OrePlu-69}.

It is obvious that any cyclic coloring of a connected plane graph $G$ requires at least $\Delta^*(G)$ colors. As concerns the upper bound, Borodin \cite{Bor-84} has conjectured (see also Conjecture 6.1 in \cite{Bor-13}, or Section 2.5 in \cite{JenTof-95}):

\begin{conjecture}{\rm\cite{Bor-84}\label{CCC:1}}
If $G$ is a connected plane graph  with maximum face degree $\Delta^*(G)$, then   
$$\chi_c(G) \leq \lfloor \frac{3}{2}\Delta^*(G)\rfloor.$$
\end{conjecture}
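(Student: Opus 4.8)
The statement to be proved is the full Cyclic Coloring Conjecture, namely that every connected plane graph $G$ satisfies $\chi_c(G)\le\lfloor\frac32\Delta^*(G)\rfloor$. The plan is the classical attack of extremal plane-graph colouring: fix $\Delta^*(G)$, take a counterexample $G$ with the fewest edges, and derive a contradiction. I would first clean up $G$ by structural reductions. By the observation on separating cycles recorded above, a minimal counterexample can contain no separating cycle $C_n$ with $n\le\Delta^*(G)$: in that case $\Delta^*(G_1),\Delta^*(G_2)\le\Delta^*(G)$ and both $G_1,G_2$ have fewer edges, so by minimality each admits a cyclic colouring with $\lfloor\frac32\Delta^*(G)\rfloor$ colours; since $n\le\lfloor\frac32\Delta^*(G)\rfloor$, each colouring is injective on the $n$ vertices of $C_n$, so a permutation of the colours makes the two agree on $C_n$, and gluing them colours $G$ --- a contradiction. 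Arguing in the same spirit gives that $G$ is $2$-connected and, after suppressing degree-$2$ vertices, that its underlying graph is simple and $3$-connected, so it suffices to treat subdivisions of simple $3$-connected plane graphs.

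Next I would run a discharging argument. By Euler's formula, assigning charge $\deg(v)-4$ to each vertex $v$ and $\deg(f)-4$ to each face $f$ gives a total charge of $-8$. One then designs local discharging rules, typically sending charge from large faces to the low-degree vertices on their boundaries, with the goal that after redistribution every vertex and every face carries nonnegative charge, contradicting the negative total. The configurations one is allowed to forbid in the minimal counterexample are the reducible ones, i.e. those admitting a recolouring. The basic reducibility estimate is a greedy bound: the number of vertices cyclically adjacent to a vertex $v$ is at most $\sum_{f\ni v}\deg(f)-2\deg(v)$, so whenever this quantity is at most $\lfloor\frac32\Delta^*(G)\rfloor-1$ one may delete $v$, colour the smaller graph by minimality, and greedily reinsert $v$. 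These lemmas forbid, for example, low-degree vertices all of whose incident faces are small, and the discharging must then show that no graph avoiding every reducible configuration exists.

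The hard part --- and the reason the conjecture is still open in general --- is the interaction of high-degree vertices with large faces. The greedy quantity $\sum_{f\ni v}\deg(f)-2\deg(v)$ is harmless only when the faces around $v$ are mostly triangular; a vertex of large degree incident to many faces of degree close to $\Delta^*(G)$ has a cyclic neighbourhood of size on the order of $\deg(v)(\Delta^*(G)-2)$, far larger than $\frac32\Delta^*(G)$, so no local greedy step recolours it and no obvious rule charges it to nonnegativity. Worse, the factor $\frac32$ is believed to be tight, so the argument must be essentially sharp and cannot afford to waste a single colour. I therefore expect that completing the proof requires a genuinely semi-global recolouring --- treating a high-degree vertex together with a controlled part of its neighbourhood at once, or replacing the crude greedy bound by a fractional or entropy-type estimate in the style of the asymptotic results of Amini, Esperet and van den Heuvel --- combined with the reduction to subdivisions of simple $3$-connected graphs to control the local geometry around such vertices. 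This last, semi-global ingredient is the principal obstacle, and it is precisely what the present note sidesteps by reducing the whole conjecture to that restricted family.
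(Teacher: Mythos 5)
The statement you were asked to prove is Borodin's Cyclic Coloring Conjecture itself, which is open: the paper does not prove it and never claims to. Its actual contribution is Theorem~\ref{thm:3}, showing that Conjecture~\ref{CCC:1} is equivalent to its restriction to subdivisions of simple $3$-connected plane graphs (Conjecture~\ref{CCC:2}), plus partial results for that family. Your first paragraph reproduces exactly this reduction, and it is essentially correct and matches the paper: the separating-cycle step is sound because in each piece $G_i$ the cycle $C_n$ bounds a face, so every cyclic coloring of $G_i$ is injective on $V(C_n)$; since $n\le\Delta^*(G)\le\lfloor\frac32\Delta^*(G)\rfloor$ a color permutation aligns the two colorings, and every face of $G$ survives as a face of $G_1$ or $G_2$, so the glued coloring is cyclic. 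One caveat: your parenthetical claim that a minimal counterexample is $2$-connected is subtler for cyclic coloring than for ordinary coloring, because at a cutvertex a single face of $G$ can be incident with vertices of several blocks, so block-by-block coloring does not immediately combine; the paper also dismisses this as easy, but it requires a permutation-of-colors counting argument across the shared faces, and your ``arguing in the same spirit'' elides it. Similarly, the passage from ``no small separating cycle'' to ``subdivision of a simple $3$-connected graph'' is precisely the content of the paper's Theorem~\ref{thm:2}, proved there via the reduction $R(G)$ and a case analysis; you assert it rather than prove it.

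The genuine gap is everything after the reduction. Your second and third paragraphs describe a discharging program but carry out none of it: no charge rules are specified, no reducible configurations are verified beyond the greedy bound $\sum_{f\ni v}\deg(f)-2\deg(v)$, and no final nonnegativity contradiction is derived. You then explicitly concede that high-degree vertices incident with many faces of degree near $\Delta^*(G)$ defeat every local greedy recoloring, that the factor $\frac32$ leaves no slack, and that the needed ``semi-global'' recoloring ingredient is unknown. That concession is accurate --- it is the reason the conjecture is open --- but it means your proposal is a research plan, not a proof. What you have actually established (modulo the caveats above) is the same equivalence the paper proves in Theorems~\ref{thm:2} and~\ref{thm:3}; the statement itself remains unproved by both you and, deliberately, by the paper, which instead settles special cases (Corollaries~\ref{cor:1}, \ref{cor:2}, \ref{thm:10} and the triangulation/quadrangulation/pentagulation corollary) via Theorems~\ref{thm:4}--\ref{thm:8}.
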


As shown in \cite{Bor-84}, the upper bound in Conjecture \ref{CCC:1} whenever true is best possible.

This conjecture is known as the Cyclic Coloring Conjecture, see e.g. \cite{HavKra-10}, \cite{HavSer-08}, or \cite{KraMad-05}.

From the Four Color Theorem  \cite{ApH-77a} and  \cite{ApH-77b} it follows that $\chi_c(G) \leq 4$ if $\Delta^*(G) = 3$. From Borodin's proof of the Ringel's conjecture \cite{Bor-84} (see also \cite{Bor-95}) we have $\chi_c(G) \leq 6$ if $\Delta^*(G) \leq 4$. Both these bounds are tight. Hebdige and Kr\'{a}{\ml} proved this conjecture for $\Delta^*(G) = 6$, see \cite{HebKra-17}. So in this note we shall study only the cases when $\Delta^*(G) \geq 5$. Ore and Plummer \cite{OrePlu-69} proved that $\chi_c(G) \leq 2\Delta^*(G)$, which was improved to $\lfloor \frac{9}{5}\Delta^*(G)\rfloor$ by Borodin, Sanders, and Zhao  \cite{BorSZ-99}, and to $\lceil \frac{5}{3}\Delta^*(G)\rceil$ by Sanders and Zhao \cite{SanZha-01}. Moreover, Borodin at al. \cite{BorSZ-99} proved that $\chi_c(G) \leq 8$ for $\Delta^*(G) = 5$. Amini, Esperet, and van den Heuvel \cite{AEH-13} proved that the conjecture holds asymptotically in the following sense: for every $\epsilon > 0$, there exists $\Delta_{\epsilon}$ such that every plane graph with maximum face degree $\Delta^* \geq \Delta_{\epsilon}$ has a cyclic coloring with at most $(\frac{3}{2}+\epsilon)\Delta^*$ colors. 

A beautiful result was obtained by Borodin, Broersma, Glebov, and van den Heuvel \cite{BBGH-07} using the following parameter 
$k^* = k^*(G)$, which denotes the maximum number of vertices that two faces of $G$ can have in common.

\begin{theorem}{\rm\cite{BBGH-07}}\label{thm:k^*}
For every connected plane graph $G$ with $\Delta^*(G) \geq 5$ it holds

$$\chi_c(G) \leq \max\{\Delta^*(G) + 3k^* + 2,\Delta^*(G) + 14\}.$$

\end{theorem}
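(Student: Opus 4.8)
The plan is to argue by minimal counterexample together with discharging, working with the \emph{cyclic graph} $H=H(G)$ whose vertex set is $V(G)$ and in which two vertices are adjacent precisely when they lie on a common face. Since the boundary vertices of each face form a clique of $H$, a cyclic coloring of $G$ is exactly a proper coloring of $H$, and the target reduces to $\chi(H)\le N$, where $N:=\max\{\Delta^*+3k^*+2,\ \Delta^*+14\}$. First I would take $G$ to be a counterexample minimizing $|V(G)|$ and reduce to the $2$-connected case: a cut vertex can be handled by coloring the blocks separately and permuting palettes so the colors agree at the cut vertex, and since the face degrees and $k^*$ of each block do not exceed those of $G$, a counterexample would descend to a smaller one. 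In a $2$-connected plane graph every face is bounded by a cycle, which is what makes the later overlap counts clean. I would record at the outset the obstacle that forces a global argument: a vertex $v$ of degree $2$ or $3$ may be incident only with faces of degree close to $\Delta^*$, so its number of cyclic neighbours can be of order $2\Delta^*$ (degree $2$) or $3\Delta^*$ (degree $3$); hence one cannot simply delete or suppress such a vertex and recolour it last, even though $N$ is much smaller than $2\Delta^*$ for large $\Delta^*$.

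The heart of the proof is a list of \emph{reducible configurations} whose absence must be forced by discharging. These are local patterns built from small-degree vertices lying on large faces, together with the incident faces; for each pattern I would delete a bounded set of vertices and edges, apply minimality to $N$-colour the smaller graph, and then extend. The extension is where $k^*$ enters: when two faces $f,f'$ meet along the deleted piece they share at most $k^*$ vertices, so the set of colours forbidden on a vertex of the piece is controlled by $\Delta^*$ plus a bounded multiple of $k^*$ rather than by $2\Delta^*$. The factor $3$ in $3k^*$ reflects that up to three large faces can surround a reducible vertex, producing up to three pairwise overlaps, each of size at most $k^*$. To extend against several already-coloured neighbours simultaneously I would recolour a constant-size neighbourhood and select the new colours by a system of distinct representatives via Hall's theorem, whose defect version yields exactly the additive constants. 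The separate term $14$ arises in the regime $k^*\le 4$, where the SDR count is dominated by fixed local overlaps rather than by $k^*$ itself; this is why the bound is stated as a maximum of two expressions.

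With reducibility in hand I would run the discharging argument from Euler's formula. Assign to each $x\in V(G)\cup F(G)$ the initial charge $\mu(x)=\deg_G(x)-4$, so that $\sum_x \mu(x)=-8<0$, using $|V|-|E|+|F|=2$ together with $\sum_v \deg_G(v)=\sum_f \deg_G(f)=2|E|$. The rules would send charge from large faces, which start with large surplus $\deg_G(f)-4$, to the low-degree vertices on their boundaries, at a rate tuned so that every vertex of degree $2$ or $3$ ends nonnegative exactly when it is \emph{not} the centre of a reducible configuration. Since $G$ contains none, I would then check face by face and vertex by vertex that the final charge satisfies $\mu^*(x)\ge 0$ for all $x$, contradicting $\sum_x \mu^*(x)=\sum_x \mu(x)=-8$. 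The hypothesis $\Delta^*\ge 5$ is used precisely to guarantee that large faces carry enough surplus to feed their low-degree neighbours.

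The step I expect to be the main obstacle is the reducibility analysis for a degree-$3$ vertex incident with three faces of near-maximum degree: here all three pairwise overlaps can be nontrivial, the forbidden-colour count is tight against $\Delta^*+3k^*+2$, and the extension must recolour a whole local neighbourhood rather than a single vertex. Making the Hall/SDR argument go through in exactly this configuration, and verifying that the constants it produces never exceed what the discharging rules can pay for, is the technically delicate core on which the entire bound rests.
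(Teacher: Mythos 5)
First, note that the paper you are working from does not prove this statement at all: Theorem~\ref{thm:k^*} is imported verbatim from the cited work of Borodin, Broersma, Glebov, and van den Heuvel, and the present authors use it only as a black box (to derive Theorem~\ref{thm:4}). So the only meaningful comparison is with the proof in that cited source, whose general architecture --- minimal counterexample, a list of reducible configurations around low-degree vertices on large faces, an extension step whose colour counts are controlled by $k^*$, and a discharging argument from Euler's formula with initial charges $\deg(x)-4$ summing to $-8$ --- your outline does correctly identify.

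The genuine gap is that what you have written is a plan, not a proof: every quantitatively load-bearing step is deferred. You never exhibit a single concrete reducible configuration, never state a discharging rule, never verify that the final charges are nonnegative, and never carry out the Hall/system-of-distinct-representatives count that is supposed to produce the specific constants $3k^*+2$ and $14$. You yourself flag the degree-$3$ case with three near-maximum faces as ``the technically delicate core on which the entire bound rests'' and then leave it unexecuted; but that core \emph{is} the theorem --- without it the argument establishes nothing beyond the shape of the bound. In addition, one step you do spell out is incorrect as stated: the reduction to the $2$-connected case via blocks does not work for cyclic colouring, because a single face of $G$ (for instance the outer face of two triangles sharing a vertex) can be incident with vertices lying in different blocks, so two vertices in distinct blocks may be cyclically adjacent in $G$ while being unconstrained in the separately coloured blocks; merely permuting palettes to agree at the cut vertex therefore need not yield a cyclic colouring of $G$. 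Handling cut vertices requires keeping track of the faces of $G$ that the block decomposition destroys, which is exactly the kind of bookkeeping the outline omits throughout.
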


They also posed the following:
\begin{conjecture}{\rm\cite{BBGH-07}\label{BBC:1}}
Every plane graph $G$ with $\Delta^*(G)$ and $k^*$ large enough has a cyclic coloring with  $$\Delta^*(G) + k^*$$ colors.
\end{conjecture}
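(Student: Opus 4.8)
The statement I must address is asymptotic --- it only claims a cyclic coloring with $\Delta^*(G)+k^*$ colors once $\Delta^*(G)$ and $k^*$ are large --- so the plan is to prove it for all $\Delta^*(G)\geq \Delta_0$ and $k^*\geq k_0$ with $\Delta_0,k_0$ absolute constants, and I would begin by recasting the problem. A cyclic coloring of $G$ is exactly a proper coloring of the auxiliary graph $H$ on $V(G)$ in which two vertices are adjacent precisely when they lie on a common face; thus $\chi_c(G)=\chi(H)$, and the goal becomes the bound $\chi(H)\leq \Delta^*(G)+k^*$. The useful feature of $H$ is that its edge set is covered by the cliques induced by the faces of $G$, these cliques have size at most $\Delta^*(G)$, and they pairwise intersect in at most $k^*$ vertices; since $H$ also contains a clique of size $\Delta^*(G)$, the conjecture is equivalent to the assertion that $\chi(H)$ exceeds the largest face by at most the overlap parameter $k^*$. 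This reformulation is where I would anchor everything that follows.

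First I would try to sharpen Theorem~\ref{thm:k^*}, which already gives $\chi(H)\leq \Delta^*(G)+3k^*+2$, by isolating where its discharging argument loses the extra $2k^*+2$ colors; the target is to replace the coefficient $3$ of $k^*$ by $1$. In parallel I would pursue the probabilistic route that yields the asymptotic $(\frac{3}{2}+\epsilon)\Delta^*$ bound of Amini, Esperet, and van den Heuvel, refining it so that the additive error is measured against $k^*$ rather than against $\Delta^*$: one colors the faces greedily in a carefully chosen order, using the semi-random (nibble) method or the Lov\'asz Local Lemma to guarantee that, when a vertex is reached, the cliques already colored around it have exhausted at most $\Delta^*(G)-1$ colors plus a correction of order $k^*$ coming from overlaps. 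For the structural side I would exploit the separating-cycle decomposition recorded in the introduction: split $G$ along a shortest separating cycle $C_n$ into $G_1$ and $G_2$ with $\Delta^*(G_i)\leq \max\{\Delta^*(G),n\}$, color each piece by induction, and glue the colorings by permuting palettes so that they agree on the vertices of $C_n$.

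The hard part will be the complete absence of slack in the coefficients: both $\Delta^*(G)$ and $k^*$ are individually tight in extremal examples, so neither a crude union bound over the faces through a vertex nor a routine reducibility analysis can succeed, and any discharging scheme must be engineered so that the leftover palette of exactly $k^*$ colors is never overspent at a vertex lying on several large faces that pairwise share close to $k^*$ vertices. Controlling precisely these heavily overlapping face configurations --- and, in the decomposition approach, reconciling the two partial colorings along $C_n$ without inflating the color count --- is the crux; this is also why the known results stop at a multiplicative factor or an additive $3k^*$. I expect that closing the gap will require combining a tailored discharging argument that pins down the local overlap structure with a probabilistic extension step, and that the eventual constants $\Delta_0,k_0$ will emerge from the local lemma estimates rather than being explicit.
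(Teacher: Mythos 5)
This statement is not a theorem of the paper at all: it is Conjecture~\ref{BBC:1}, an open conjecture posed by Borodin, Broersma, Glebov, and van den Heuvel in \cite{BBGH-07}, which the paper merely quotes (the best known bound in this direction remains $\Delta^*(G)+3k^*+2$ from Theorem~\ref{thm:k^*}). There is therefore no proof in the paper to compare against, and nothing you wrote constitutes one either. Your text is a research plan: every load-bearing step is announced rather than carried out (``I would try to sharpen,'' ``I would pursue,'' ``I expect that closing the gap will require''), and you yourself identify the crux --- controlling vertices lying on several large faces that pairwise share close to $k^*$ vertices --- without resolving it. A reformulation of $\chi_c(G)$ as the chromatic number of the face-incidence graph $H$, plus a list of techniques that might work, is not a proof of anything.

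Beyond the global absence of an argument, two of your concrete suggestions have specific problems. First, the separating-cycle decomposition used in Theorem~\ref{thm:3} does not obviously cooperate with the parameter $k^*$: cutting along $C_n$ creates a new face $g_i$ in each piece, and $g_i$ can share many more vertices with an existing face than any two faces of $G$ did, so $k^*(G_i)$ may exceed $k^*(G)$ and the induction hypothesis need not apply with the right parameters; moreover the paper's gluing step works because both pieces are colored with the same generous budget $\lfloor \frac{3}{2}\Delta^*\rfloor$ and $C_n$ is rainbow in each, whereas with a budget of $\Delta^*(G)+k^*$ you have no room to spare for the palette permutation you invoke. Second, the claim that a nibble or Local Lemma argument would leave a ``correction of order $k^*$'' at each vertex is precisely the open problem restated in probabilistic language --- the Amini--Esperet--van den Heuvel machinery loses an $\epsilon\Delta^*$ term, not an $O(k^*)$ term, and no mechanism is offered for converting one into the other. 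If you want to prove something true and provable from this paper, the right targets are Theorems~\ref{thm:2}--\ref{thm:8} and their corollaries, which concern subdivisions of simple $3$-connected plane graphs, not this conjecture.
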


Better bounds are known for simple $3$-connected plane graphs. The first bound for such graphs, $\chi_c(G) \leq \Delta^*(G) + 9$, was obtained by Plummer and Toft \cite{PluTof-87}. They also conjectured that any simple $3$-connected plane graph has $\chi_c(G) \leq \Delta^*(G) + 2$. The best presently known upper bounds for the cyclic chromatic number of simple $3$-connected plane graphs are:  $\chi_c(G) \leq \Delta^*(G) + 1$ for $\Delta^*(G) \geq 122$ by Borodin and Woodall \cite{BorWo-99} and
for $\Delta^*(G) \geq 60$ by Enomoto, Hor\v n\'{a}k, and Jendro{\ml} \cite{EnoHorJen-01}, $\chi_c(G) \leq \Delta^*(G) + 2$ for $\Delta^*(G) \geq 24$ by Hor\v n\'{a}k and Jendro{\ml} \cite{HorJen-99}, for $\Delta^*(G) \geq 18$ by Hor\v n\'{a}k and Zl\'{a}malov\'{a} \cite{HorZla-10}, for $\Delta^*(G) \geq 16$ by Dvo\v r\'{a}k, Hebdige, Hl\'{a}sek, Kr\'{a}{\ml}, and Noel \cite{DHHK-17}, for $\delta(G) = 4$ and $\Delta^*(G) \geq 6$, or 
$\delta(G) = 5$ by Zl\'{a}malov\'{a} \cite{Zla-10}, and for $G$ being locally connected by Kriesell \cite{Kri:06}. Azarija, Erman, Kr\'{a}{\ml}, Krnc, and Stacho \cite{AEK-12} proved that for every plane graph $G$, in which any two faces of degree at least four are vertex disjoint, holds  $\chi_c(G) \leq \Delta^*(G) + 1$. 
Havet, Sereni, and \v Skrekovski \cite{HavSer-08} proved that  $\chi_c(G) \leq 11$ for 
$\Delta^*(G) = 7$. The best presently known general upper bound, $\chi_c(G) \leq \Delta^*(G) + 5$, is by Enomoto and Hor\v n\'{a}k \cite{EnoHor-09}. Summarizing we have:

\begin{theorem}\label{thm:+r}
If $G$ is a simple $3$-connected plane graph, then 
$\chi_c(G) \leq \Delta^*(G) + r$, 
where
\begin{enumerate}
\item $r=1$ if $\Delta^*(G) \geq 60$, or $\Delta^*(G) = 3$, or any two faces of $G$ of degree at least four are vertex disjoint, 
\item $r=2$ if $\Delta^*(G) \geq 16$, or $\Delta^*(G) = 4$, or $\delta(G) = 4$ and $\Delta^*(G) \geq 6$, or $\delta(G) = 5$, or $G$ is locally connected,
\item $r=3$ if $5 \leq \Delta^*(G) \leq 6$, 
\item $r=4$ if $\Delta^*(G) = 7$, and
\item $r=5$ in the remaining cases.
\end{enumerate}
\end{theorem}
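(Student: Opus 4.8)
The plan is to read Theorem~\ref{thm:+r} as a compilation statement and to prove it by a case analysis on $\Delta^*(G)$ — which satisfies $\Delta^*(G) \geq 3$ for every simple $3$-connected plane graph, since each face has degree at least three — checking that in each case one of the results cited above yields the claimed value of $r = \chi_c(G) - \Delta^*(G)$, and that the five listed items together exhaust all possibilities.

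First I would handle the regimes that are uniform in $\Delta^*(G)$. For $\Delta^*(G) \geq 60$ the bound $\chi_c(G) \leq \Delta^*(G)+1$ of \cite{EnoHorJen-01} gives $r=1$, and for $\Delta^*(G) \geq 16$ the bound $\chi_c(G) \leq \Delta^*(G)+2$ of \cite{DHHK-17} gives $r=2$. The purely structural clauses are independent of the magnitude of $\Delta^*(G)$: the hypothesis that any two faces of degree at least four are vertex-disjoint gives $r=1$ via \cite{AEK-12}, while each of the conditions $\delta(G)=4$ with $\Delta^*(G) \geq 6$, $\delta(G)=5$, and local connectedness gives $r=2$ via \cite{Zla-10} and \cite{Kri:06}.

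Next I would dispatch the small values of $\Delta^*(G)$ one by one, each being an isolated case settled by a specific reference: $\Delta^*(G)=3$ gives $\chi_c(G) \leq 4$ (hence $r=1$) by the Four Color Theorem \cite{ApH-77a, ApH-77b}; $\Delta^*(G)=4$ gives $\chi_c(G) \leq 6$ (hence $r=2$) by Borodin's proof of Ringel's conjecture \cite{Bor-84}; $\Delta^*(G)=5$ gives $\chi_c(G) \leq 8$ by \cite{BorSZ-99} and $\Delta^*(G)=6$ gives $\chi_c(G) \leq \lfloor \frac{3}{2}\cdot 6\rfloor = 9$ by \cite{HebKra-17}, so both give $r=3$; and $\Delta^*(G)=7$ gives $\chi_c(G) \leq 11$ (hence $r=4$) by \cite{HavSer-08}. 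The only integers left uncovered are $8 \leq \Delta^*(G) \leq 15$, the ``remaining cases'', for which the best universal estimate $\chi_c(G) \leq \Delta^*(G)+5$ of \cite{EnoHor-09} supplies $r=5$.

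The main thing to get right is not a genuine difficulty but the bookkeeping. I must confirm that the ranges $\{3\}$, $\{4\}$, $\{5,6\}$, $\{7\}$, $\{8,\dots,15\}$, $\{\geq 16\}$ exhaust all admissible values of $\Delta^*(G)$, and I must note that the structural clauses overlap both with one another and with the $\Delta^*(G)$-ranges (for example a graph with $\delta(G)=5$ and $\Delta^*(G)=9$ meets the hypotheses of both item~2 and the ``remaining cases''). The statement is therefore to be read as asserting the smallest $r$ among all applicable clauses, which is consistent because a bound with a smaller $r$ immediately implies every bound with a larger $r$.
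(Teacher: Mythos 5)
Your proposal is correct and matches the paper's intent exactly: Theorem~\ref{thm:+r} is stated as a summary of the previously cited results, and your case analysis simply makes explicit the same attribution of each clause to its reference (\cite{EnoHorJen-01}, \cite{DHHK-17}, \cite{AEK-12}, \cite{Zla-10}, \cite{Kri:06}, the Four Color Theorem, \cite{Bor-84}, \cite{BorSZ-99}, \cite{HebKra-17}, \cite{HavSer-08}, and \cite{EnoHor-09} for the remaining range $8 \leq \Delta^*(G) \leq 15$) together with the bookkeeping that these ranges exhaust all $\Delta^*(G) \geq 3$.
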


In this paper we show that to prove Conjecture \ref{CCC:1} it is enough to do it for subdivisions of simple $3$-connected plane graphs. Next we have obtained four different upper bounds on $\chi_c(G)$ for graphs from this family; three of them are tight.  As corollaries  we show that Conjecture \ref{CCC:1} holds for large maximum face degree subdivisions of simple $3$-connected plane graphs, and for four wide families of plane graphs without restrictions on maximum face degrees.



\section{Definitions and some remarks about the structure of plane graphs}\label{s:equable}

Throughout this paper $P_m$ will denote a path of length $m$. By $t = t(G)$ we will denote the number of vertices of a longest path in $G$ all vertices of which have degree 2.

For a $2$-connected plane graph $G$ other than cycle the {\it{reduction}} $R(G)$ of $G$ is a graph obtained from $G$ by replacing all maximal $u,v$-paths whose all interior vertices are of degree 2 with the edges $uv$. More precisely,
$V(R(G)) = \{v \in V(G)| \deg_G(v)\geq 3\} = V_{\geq 3}(G)$ and $E(R(G)) = P(G)$ which is the set of all maximal paths of $G$ all interior vertices of which have degree 2 and whose ends are vertices of degree at least 3.

Observe, that $|F(G)| = |F(R(G))|$ and that there is a one-to-one correspondence between sets of faces of $F(G)$ and $F(R(G))$, between the set of edges $E(R(G))$ and the set of paths $P(G)$, and between the sets of vertices $V(R(G))$ and $V_{\geq 3}(G)$. Clearly, $R(G)$ is a $2$-connected plane graph with minimum degree 3.

Observe, that $R(G)$ has  at least one of the following properties:
\begin{enumerate}
\item
It is a simple $3$-connected graph.
\item
It contains a 2-face [u,v].
\item
It has a vertex-cut $\{u, v\}$ and a component $K$ of $R(G)\setminus \{u, v\}$ such that a $2$-connected subgraph $H_R(u,v)$ on the vertex set $V(K)\cup\{u,v\}$ has the following structure: All vertices and edges of $H_R(u,v)$ lie on or in the interior (resp. in the exterior) of a separating cycle $C'$ determined by two internally vertex disjoint $u,v$-paths $P'$ and $P''$ whose all internal (resp. external) vertices are from the set $V(K)$.
\end{enumerate}

Note, that it is enough to choose a vertex-cut $\{u, v\}$ of  minimal number of vertices of $V(K)$. Then $H_R(u,v)$ is the subgraph of $R$ consisting of the vertex set $V(K)\cup\{u,v\}$ and the edge set containing $E(K)$ and all edges of $R$  between the vertices of $V(K)$ and the vertices of $\{u, v\}$ (and no edge $uv$). From the minimality of $K$ it follows that $H_R(u,v)$ is $2$-connected. The boundary cycle of $H_R(u,v)$ containing both $u$ and $v$ is chosen as a desired cycle  $C'$. 

\section{A structural Theorem}

\begin{theorem}\label{thm:2}
If $G$ is a $2$-connected plane graph with maximum face degree $\Delta^*(G)$ and at least four faces, then $G$ is either 
a subdivision of a simple $3$-connected plane graph or contains a separating cycle $C_n$ with 
$n \leq\Delta^*(G)$.  
\end{theorem}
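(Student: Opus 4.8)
The plan is to analyze the reduction $R(G)$ of $G$ using the trichotomy established in Section~\ref{s:equable}. Recall that $R(G)$ is obtained by suppressing degree-2 vertices, is $2$-connected with minimum degree $3$, and satisfies $|F(G)|=|F(R(G))|$ with a face-preserving correspondence. Since $G$ has at least four faces, so does $R(G)$, so $R(G)$ is not a cycle and the reduction is well-defined. The structural observation preceding the theorem tells us that $R(G)$ falls into (at least) one of three cases: it is simple $3$-connected; it contains a $2$-face $[u,v]$; or it has a $2$-vertex-cut $\{u,v\}$ together with a subgraph $H_R(u,v)$ bounded by a separating cycle $C'$. I would take these three cases as the skeleton of the proof, and in each case either conclude that $G$ is a subdivision of a simple $3$-connected plane graph or exhibit the desired short separating cycle in $G$.

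\emph{Case 1.} If $R(G)$ is simple and $3$-connected, then $G$, being obtained from $R(G)$ by subdividing edges (each edge of $R(G)$ corresponds to a maximal path of $G$ through degree-2 vertices), is exactly a subdivision of a simple $3$-connected plane graph, and we are done. This is the clean terminal case.

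\emph{Cases 2 and 3.} In these cases the goal is to produce a separating cycle $C_n$ in $G$ with $n \le \Delta^*(G)$. First I would handle the $2$-face case: a $2$-face $[u,v]$ of $R(G)$ corresponds to a pair of internally disjoint $u,v$-paths in $G$ bounding a face, and I would argue these two paths, lifted back to $G$, form a cycle $C_n$ that separates the plane. The crucial point is the length bound: the cycle consists of the boundary of a face (or an adjacent pair of faces), so its length is controlled by the face degrees, and I would show $n \le \Delta^*(G)$ by bounding the number of vertices on $C_n$ in $G$ against the maximum face degree. In Case 3, the separating cycle $C'$ in $R(G)$ determined by the two internally disjoint $u,v$-paths $P'$ and $P''$ lifts to a separating cycle in $G$; since both the interior and the component $K$ are nonempty by the cut hypothesis, separation is immediate, and again the length is bounded by reading off how the arcs $P'$ and $P''$ sit on the boundary of the faces they border.

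The main obstacle I anticipate is the length bound $n \le \Delta^*(G)$ in Cases~2 and~3, rather than the separation property, which follows from the topology of the cut. The subtlety is that when we lift a short cycle of $R(G)$ back to $G$, subdivision vertices inflate its length, so a naive count can exceed $\Delta^*(G)$. The right move is not to count edges of $R(G)$ but to choose $C_n$ so that each of its two arcs lies on the boundary of a single face of $G$: then the number of vertices of $C_n$ is at most the sum of the relevant face degrees minus the shared endpoints, and careful bookkeeping (using the minimality of $K$ in Case~3 to ensure the arcs hug a face boundary) should bring this down to at most $\Delta^*(G)$. I would also need to verify that the separating cycle found in $R(G)$ can be chosen to be short enough before lifting, which is exactly what the choice of $C'$ as the boundary cycle of $H_R(u,v)$ containing both $u$ and $v$ is designed to guarantee.
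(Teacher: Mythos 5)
Your skeleton --- analyzing the reduction $R(G)$ via the trichotomy, with the simple $3$-connected case being terminal --- is exactly the paper's, and you correctly locate the crux in the length bound for Cases 2 and 3. But the mechanism you propose for that bound does not close the gap. If you choose $C_n$ so that each of its two $u,v$-arcs lies on the boundary of a single face of $G$, then the count you describe, ``the sum of the relevant face degrees minus the shared endpoints,'' gives only $n \le \deg_G(f_1)+\deg_G(f_2)-2 \le 2\Delta^*(G)-2$, which is nowhere near $n\le \Delta^*(G)$; no bookkeeping on that estimate recovers the missing factor of $2$. In addition, in the $2$-face case the cycle you first propose (the lift to $G$ of the two parallel edges of $R$) is a facial cycle of $G$, hence not separating at all, since one of its two sides is empty.

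The missing idea is a specific asymmetric choice of cycle. Let $C_{a+b}$ be the boundary cycle of $H_G(u,v)$, written as the union of $u,v$-paths $P_a$ and $P_b$ with, w.l.o.g., $a\le b$. Let $f_1$ and $f_2$ be the faces of $G$ lying outside $C_{a+b}$ and incident with $P_a$ and $P_b$ respectively, and let $P_c$ and $P_d$ be the other $u,v$-arcs of their boundaries, so that $\deg_G(f_1)=a+c\le\Delta^*(G)$ and $\deg_G(f_2)=b+d\le\Delta^*(G)$. The separating cycle to take is $C_n=P_a\cup P_d$: its length satisfies $n=a+d\le b+d=\deg_G(f_2)\le\Delta^*(G)$, i.e.\ only the degree of \emph{one} face enters, and the inequality $a\le b$ absorbs the arc $P_a$ entirely. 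Separation then holds because the interior of $C_{a+d}$ contains the internal vertices of $P_b$ together with everything inside $H_G(u,v)$, while its exterior contains the internal vertices of $P_c$ and the rest of $G$; both are nonempty once the cases of exactly four faces and of a $2$-face of $G$ itself are disposed of at the outset, as the paper does. Without this step of discarding the longer arc $P_b$ and replacing it by the far arc $P_d$ of the neighbouring face, your argument does not reach the stated bound.
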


\begin{proof}
Let $\Delta^* = \Delta^*(G)$. It is easy to see that the theorem holds if $G$ has exactly four faces or it contains 
a $2$-face. Next we distinguish two cases depending on the structure of the reduction $R = R(G)$.

 If $G$ is a subdivision of a simple $3$-connected plane graph (i.e. $R$ is a simple $3$-connected plane graph), then there is nothing to prove.

Otherwise the reduction $R$ contains either a $2$-face [u,v] or has a suitable vertex-cut \{u, v\}. Consider $H_G(u,v)$, the subgraph of 
$G$ corresponding to the $2$-face [u,v] or to the subgraph $H_R(u,v)$ of $R$. 

Let the cycle $C_{a+b}$, bounding $H_G(u,v)$ in $G$, be defined by the $u,v$-paths $P_a$ and $P_b$  corresponding to the edges 
$uv$ of the $2$-face $[u,v]$ or to the $u,v$-paths $P'$ and $P''$ of the subgraph $H_R(u,v)$, respectively, of $R$.  Let $f_1$ and $f_2$ be the faces of $G$ in the exterior of $C_{a+b}$ defined by the paths $P_a$ and $P_b$, respectively. Let $P_c$ and $P_d$ be the other $u,v$-paths bounded the faces $f_1$ and $f_2$, respectively. This is always possible because in every $2$-connected plane graph any face is bounded by a cycle. Observe that $\deg_G(f_1) = a + c \leq \Delta^*$ and $\deg_G(f_2) = b + d \leq \Delta^*$. Let, w.l.o.g., $b\geq a$. Then $a + d \leq \Delta^*$  and there is, in $G$, a separating cycle $C_n$ of length at most $\Delta^*$, namely $C_n = C_{a+d}$.

\end{proof}


\section{Two Lemmas}

\begin{lemma}{\label{le:1}}
If $G$ is a $2$-connected plane graph with exactly three faces and maximum face degree  $\Delta^*(G)$, then
$$\chi_c(G) \leq \lfloor \frac{3}{2}\Delta^*(G)\rfloor.$$
\end{lemma}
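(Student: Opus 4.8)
The plan is to first pin down the structure of $G$ and then observe that its cyclic coloring graph is complete, which collapses the lemma to a one-line arithmetic inequality.

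First I would determine what a $2$-connected plane graph with exactly three faces looks like by passing to its reduction $R = R(G)$. Since $|F(R)| = |F(G)| = 3$ and $R$ is $2$-connected with minimum degree $3$, Euler's formula gives $|E(R)| = |V(R)| + 1$, and the handshake bound $2|E(R)| \ge 3|V(R)|$ forces $|V(R)| \le 2$. As $R$ is $2$-connected and loopless, it must have exactly two vertices joined by three parallel edges; hence $G$ is a subdivision of this theta graph, i.e. $G$ consists of two vertices $u, v$ joined by three internally vertex-disjoint $u,v$-paths $P_a, P_b, P_c$ of edge-lengths $a \le b \le c$. Each of the three pairs of paths bounds exactly one face, so the three faces of $G$ have degrees $a+b$, $b+c$, and $c+a$; in particular $\Delta^*(G) = b + c$, and $|V(G)| = a + b + c - 1$.

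The key step is the observation that every two vertices of $G$ lie on a common face. Indeed $u$ and $v$ lie on all three faces, any internal vertex of $P_x$ and any internal vertex of $P_y$ (for distinct $x, y \in \{a,b,c\}$) both lie on the unique face bounded by $P_x$ and $P_y$, and two internal vertices of the same path lie on both faces incident with that path. Consequently the graph whose edges join cyclically adjacent vertices is the complete graph on $V(G)$, so a cyclic coloring must assign pairwise distinct colors, and $\chi_c(G) = |V(G)| = a + b + c - 1$.

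It then remains to verify $a + b + c - 1 \le \lfloor \frac{3}{2}(b+c)\rfloor$. Since $a \le b$ and $b \le c$, we have $a \le b \le \frac{1}{2}(b+c)$, whence $a + b + c \le \frac{3}{2}(b+c)$; as $a+b+c-1$ is an integer not exceeding $\frac{3}{2}(b+c)$, it is at most $\lfloor \frac{3}{2}(b+c)\rfloor$, which is exactly the claimed bound. The only genuinely substantive point is the completeness observation in the third paragraph — in particular checking that internal vertices of any two of the three paths really do share a face — since once $\chi_c(G) = a+b+c-1$ is established the rest is immediate; the structural reduction to a theta subdivision is the other thing to get right, but it follows routinely from Euler's formula.
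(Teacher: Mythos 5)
Your proof is correct and follows essentially the same route as the paper: identify $G$ as three internally disjoint $u,v$-paths, observe that all vertices are pairwise cyclically adjacent so $\chi_c(G)=a+b+c-1$, and finish by arithmetic on the face degrees. The only cosmetic differences are that you justify the theta structure via Euler's formula (the paper simply asserts it) and you use the ordering $a\le b\le c$ with $\Delta^*(G)=b+c$ where the paper sums all three inequalities $a+b,\,b+c,\,c+a\le\Delta^*(G)$; both yield the same bound.
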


\begin{proof}
The graph $G$ consists of three edge disjoint paths $P_a$, $P_b$, and $P_c$ joining two vertices $u$ and $v$. Because for any two vertices there is, in $G$, a common face incident with both of them, we have $\chi_c(G) = a+b+c-1$. Observe that 
$a+b\leq \Delta^*(G)$, $a+c\leq \Delta^*(G)$, and $c+b\leq \Delta^*(G)$. This implies 

$$2(a+b+c-1) + 2 = 2\chi_c(G) + 2 \leq 3\Delta^*(G)$$

which immediately gives the statement of the lemma.

\end{proof}

The construction of the following lemma first appears in \cite{Bor-84}.

\begin{lemma}{\label{le:2}}
For every $t \geq 0$ there exists a $2$-connected plane graph $H$ of maximum face degree $\Delta^*(H)$ with the reduction $R(H)$ being simple $3$-connected plane graph, $t = t(H)$, and
$$\chi_c(H) = \Delta^*(H) + t + 2 = \lfloor \frac{3}{2}\Delta^*(H)\rfloor.$$
\end{lemma}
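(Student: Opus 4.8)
The plan is to exhibit, for each $t\ge 0$, a single explicit graph $H=H_t$ and verify the required properties directly. I would take $R$ to be the triangular prism, i.e. two disjoint triangles $v_1v_2v_3$ and $u_1u_2u_3$ joined by the three ``vertical'' edges $v_iu_i$; this is a simple $3$-connected plane graph. I obtain $H$ by subdividing each edge $v_iu_i$ exactly $t$ times and leaving the six triangle edges untouched. Then $H$ is $2$-connected, and its degree-$2$ vertices are precisely the $3t$ interior subdivision vertices, which form three internally disjoint paths of $t$ vertices each whose ends $v_i,u_i$ keep degree $3$. Hence $R(H)=R$ is simple and $3$-connected and $t(H)=t$. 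The two triangles remain $3$-faces, while each of the three quadrilateral faces of the prism becomes a cycle through two unsubdivided triangle edges and two subdivided spokes, of length $1+(t+1)+1+(t+1)=2t+4$, so $\Delta^*(H)=2t+4$.

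The heart of the argument is to show that $H$ is a \emph{cyclic clique}: any two of its $|V(H)|=6+3t=3t+6$ vertices are incident with a common face. Granting this, every cyclic coloring must assign pairwise distinct colors, so $\chi_c(H)\ge |V(H)|$; since coloring all vertices distinctly is itself a valid cyclic coloring, equality holds and $\chi_c(H)=3t+6$. Combining the three computations then gives $\chi_c(H)=3t+6=(2t+4)+t+2=\Delta^*(H)+t+2$ and $\lfloor\frac{3}{2}\Delta^*(H)\rfloor=\lfloor\frac{3}{2}(2t+4)\rfloor=3t+6$, which is exactly the asserted chain of equalities.

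To verify the cyclic-clique property I would record, for each vertex, the set of faces incident with it. Writing $Q_{12},Q_{23},Q_{31}$ for the three quadrilateral faces and $T_{\mathrm{in}},T_{\mathrm{out}}$ for the two triangles, every vertex on the inner triangle (namely $v_1,v_2,v_3$) is incident with $T_{\mathrm{in}}$ and with exactly two of the $Q$'s, symmetrically on the outer side, and each interior spoke vertex is incident with exactly two of the $Q$'s. The key combinatorial observation is that any two $2$-element subsets of the $3$-element set $\{Q_{12},Q_{23},Q_{31}\}$ intersect; this settles at once every pair of vertices compared through their ``$Q$-types'', while the remaining pairs are two vertices on a common triangle, which share $T_{\mathrm{in}}$ or $T_{\mathrm{out}}$. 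I expect this verification to be the only delicate point: the naive attempts---subdividing the spokes of a wheel, or subdividing all edges of $K_4$---fail precisely here, since ``opposite'' subdivided edges land on disjoint face-sets and their interior vertices are then not cyclically adjacent. The prism is engineered so that its three quadrilateral faces pairwise share a full spoke, which is exactly what forces cofaciality of every pair and pushes $\Delta^*(H)$ up to $2t+4$ (rather than $2t+3$) without lengthening the longest degree-$2$ path beyond $t$.
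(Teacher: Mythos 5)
Your construction is exactly the paper's: the triangular prism with its three vertical edges replaced by paths of length $t+1$, and your verification (cyclic-clique property via pairwise-intersecting $2$-subsets of the three quadrilateral faces, $\Delta^*(H)=2t+4$, $t(H)=t$) correctly fills in the details the paper leaves implicit. The proposal is correct and takes essentially the same approach as the paper.
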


\begin{proof}
The triangular prism $D_3$ with three edges joining the two triangles replaced by disjoint paths of equal length 
$t+1, t \geq 0$, gives a graph $H$ with 
$\chi_c(H) = \Delta^*(H) + t +2 = \lfloor \frac{3}{2}\Delta^*(H)\rfloor.$ 
\end{proof}
\section{Reduction of Conjecture \ref{CCC:1}}

To prove Conjecture \ref{CCC:1} it is enough to prove the following:

\begin{conjecture}\label{CCC:2}
If $G$ is subdivision of a simple $3$-connected plane graph  with maximum face degree $\Delta^*(G)$, then   
$$\chi_c(G) \leq \lfloor \frac{3}{2}\Delta^*(G)\rfloor.$$
\end{conjecture}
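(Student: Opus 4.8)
The plan is to prove the bound directly for $G$, exploiting that its reduction $R = R(G)$ is a simple $3$-connected plane graph, so that the strong bounds of Theorem~\ref{thm:+r} are available on the skeleton. First I would fix the subdivision structure: $V(G)$ splits into the \emph{branch vertices} $V_{\geq 3}(G) = V(R)$ and the degree-$2$ vertices lying in the interiors of the paths of $P(G)$, one such path replacing each edge of $R$. Using the one-to-one correspondence between $F(G)$ and $F(R)$ from Section~\ref{s:equable}, each face $f$ of $G$ satisfies $\deg_G(f) = \deg_R(f) + s_f$, where $s_f$ counts the subdivision vertices on the boundary of $f$; in particular $\Delta^*(R) \leq \Delta^*(G) =: \Delta^*$, and writing $\lfloor \tfrac{3}{2}\Delta^*\rfloor = \Delta^* + \lfloor \tfrac{1}{2}\Delta^*\rfloor$ exhibits a surplus of $\lfloor \tfrac12\Delta^*\rfloor$ colours above the trivial lower bound that the extension step must live within.

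Second, I would colour the branch vertices. A cyclic colouring of $R$ keeps apart any two vertices of $V(R)$ sharing a face of $R$, and these are exactly the branch vertices sharing a face of $G$; hence a cyclic colouring of $R$ is a legal partial colouring of the branch vertices of $G$. By Theorem~\ref{thm:+r} this costs at most $\Delta^*(R) + 5 \leq \Delta^* + 5$ colours, comfortably inside the palette of size $\lfloor \tfrac32\Delta^*\rfloor$ once $\Delta^*$ is large, and fewer when the refinements (for instance $+1$ for $\Delta^*(R)\geq 60$) apply. Third, and this is the crux, I would extend the colouring to the degree-$2$ vertices chain by chain. A subdivision path $\pi$ with internal vertices $w_1,\dots,w_s$ borders exactly two faces $f,g$, and $w_1,\dots,w_s$ together with the endpoints of $\pi$ all lie on both $\partial f$ and $\partial g$. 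Thus each $w_i$ must avoid the colours appearing on $(\partial f\cup\partial g)\setminus\{w_i\}$, of which there are at most $\deg_G(f)+\deg_G(g)-s-3 \leq 2\Delta^* - s - 3$. When $s \gtrsim \tfrac12\Delta^*$ this count drops below $\lfloor\tfrac32\Delta^*\rfloor$ and a greedy sweep along $\pi$ succeeds, and the extremal graphs of Lemma~\ref{le:2} confirm that this is precisely the slack the factor $\tfrac32$ is meant to buy.

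The obstacle is the opposite regime of short chains embedded in a dense skeleton: there $s$ is small, $\partial f$ and $\partial g$ overlap only near their shared endpoints, and the naive forbidden count can rise to roughly $2\Delta^*$, overrunning the palette. Defeating it requires more than colouring $R$ somehow: one must choose the colouring of $R$ so that cyclically adjacent faces share large \emph{colour sets} rather than merely few vertices, guaranteeing a free colour on every short chain, and then reconcile this with small $\Delta^*$, where even the $\Delta^*+5$ branch bound overruns and one is thrown back on the deep inputs for $\Delta^*\in\{3,4,5,6\}$ (the Four Colour Theorem, Borodin's proof of Ringel's conjecture, and Hebdige--Kr\'{a}{\ml}). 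Producing such a palette assignment uniformly across all chain lengths and all $\Delta^*$ is exactly the open heart of Conjecture~\ref{CCC:2}, so in practice I would first secure the bound under the natural hypothesis that $\Delta^*$ is large relative to $t=t(G)$ — which eliminates the short-chain difficulty, since every chain then automatically has $s$ large enough for the greedy extension — and for structurally restricted skeletons where the faces $f,g$ are forced to share enough colours, in line with the corollaries announced in the abstract.
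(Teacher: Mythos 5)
The statement you set out to prove is Conjecture~\ref{CCC:2}, and the paper does not prove it: Theorem~\ref{thm:3} shows it is \emph{equivalent} to Borodin's Conjecture~\ref{CCC:1}, and everything else in the paper is a partial result. Your closing admission that the uniform statement is the ``open heart'' of the problem is therefore accurate; as a proof of the statement, the proposal unavoidably has a gap. What can be checked is whether the partial program you outline matches what is actually provable, and there your fallback step contains a concrete error: the regime is inverted. If $\Delta^*$ is large relative to $t = t(G)$, then \emph{every} chain has $s \leq t$ internal vertices, i.e.\ $s$ is \emph{small} compared with $\Delta^*$; by your own count, a vertex of a chain bordering two faces of degree close to $\Delta^*$ has up to $2\Delta^* - s - 3$ forbidden colours, which overruns the palette $\lfloor \frac{3}{2}\Delta^*\rfloor$ precisely when $s \leq \lceil \frac{1}{2}\Delta^*\rceil - 3$. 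So the hypothesis ``$\Delta^*$ large relative to $t$'' places every such chain in the failing range of your greedy sweep rather than eliminating the difficulty --- it \emph{is} the short-chain regime, not its complement. Note also that no hypothesis on $t$ could make ``every chain automatically long'': $t$ bounds the longest chain only, and unsubdivided edges (with $s = 0$) may always be present.

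The regime $\Delta^*$ large with respect to $t$ is exactly the one the paper does secure --- Corollary~\ref{cor:1}, for $\Delta^*(G) \geq \max\{6t + 16, 28\}$ --- but not by any greedy extension: it specializes the deep Theorem~\ref{thm:k^*} of Borodin, Broersma, Glebov and van den Heuvel, using that a subdivision of a simple $3$-connected plane graph has $k^* = t + 2$ (Theorem~\ref{thm:4}). Likewise, the paper's unconditional cases (subdivisions of plane triangulations, of simple $3$-connected quadrangulations and of pentagulations with even maximum face degree, and regular subdivisions with $t \geq 1$) do not arise from choosing a cyclic colouring of $R$ whose adjacent faces share large colour sets, as the third step of your plan requires; they follow from the disjoint-palette bound $\chi_c(G) \leq \chi'(S) + \chi_c(R)$ of Theorem~\ref{thm:6}, where $S$ is the subdivision multigraph (vertices are the faces of $G$, edges the $2$-vertices of $G$), estimated via Shannon's theorem in Theorem~\ref{thm:7} and via the Vizing--Gupta bound, with $\mu(S) = t$, in Theorem~\ref{thm:8}. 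If you want salvageable partial results from your sketch, replacing the per-chain greedy by a proper edge-colouring of $S$ on a palette disjoint from that of $R$ is the missing idea: it handles all chains simultaneously, short ones included, because a $2$-vertex then only needs to avoid the colours of the other $2$-vertices on its two faces, and your worst-case count $2\Delta^*$ never arises.
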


\begin{theorem}\label{thm:3}
Conjecture \ref{CCC:1} holds if and only if Conjecture \ref{CCC:2} holds.
\end{theorem}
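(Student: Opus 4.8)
The forward implication is immediate: every subdivision of a simple $3$-connected plane graph is itself a connected plane graph, so Conjecture \ref{CCC:1} restricts at once to Conjecture \ref{CCC:2}. All the content lies in the converse, and the plan is to assume Conjecture \ref{CCC:2} and establish Conjecture \ref{CCC:1} by induction on $|V(G)|+|E(G)|$, peeling off one structural feature at a time and always recombining the partial colorings by the same color-permutation device.

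First I would dispose of the low-connectivity and small cases. If $G$ is not $2$-connected, I pick an end-block $B$ attached at a cut vertex $v$ and set $G' = G-(V(B)\setminus\{v\})$. Both $B$ and $G'$ are connected and strictly smaller, and both satisfy $\Delta^* \le \Delta^*(G)$: the only face of $G$ missing from $B$ and $G'$ is the single merged face $f$ whose boundary is that of the host face $\phi$ of $G'$ together with the outer face $\beta$ of $B$, glued at $v$, so $\deg_G(f) = \deg_{G'}(\phi)+\deg_B(\beta)-1 \le \Delta^*(G)$ and hence $\deg_{G'}(\phi), \deg_B(\beta) \le \Delta^*(G)$. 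Inductive colorings of $B$ and $G'$ share only $v$, so I permute the palette of $B$ to send its color on $v$ to the color $c'(v)$ and to push the remaining $\deg_B(\beta)-1$ boundary colors off the $\deg_{G'}(\phi)-1$ colors used around $\phi$; this succeeds precisely because $\lfloor\frac{3}{2}\Delta^*(G)\rfloor \ge \deg_G(f)$. If instead $G$ is $2$-connected with at most three faces, it is a cycle (for which $\chi_c = \Delta^*$) or it is covered directly by Lemma \ref{le:1}.

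For a $2$-connected $G$ with at least four faces I would invoke Theorem \ref{thm:2}. If $G$ is a subdivision of a simple $3$-connected plane graph, Conjecture \ref{CCC:2} gives the bound outright. Otherwise $G$ has a separating cycle $C_n$ with $n \le \Delta^*(G)$; deleting the exterior and the interior of $C_n$ yields strictly smaller $2$-connected graphs $G_1, G_2$ with $\Delta^*(G_i) \le \max\{\Delta^*(G),n\} = \Delta^*(G)$. Each inductive coloring assigns $n$ distinct colors to $C_n$, since $C_n$ bounds the new faces $g_1$ and $g_2$, so a single permutation aligns the coloring of $G_2$ with that of $G_1$ on all of $C_n$; the union is then a cyclic coloring of $G$, because every face of $G$ survives in exactly one $G_i$ while $g_1$ and $g_2$ do not.

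The routine part is verifying that each reduction strictly decreases $|V(G)|+|E(G)|$ and never raises $\Delta^*$. The main obstacle is the recombination at a cut vertex: unlike the separating-cycle split, where the two pieces share an entire cycle that is forced to be rainbow, here the pieces share only $v$, yet the merged face $f$ cyclically joins boundary vertices drawn from both sides. The crux is to observe that these vertices nonetheless all lie on a single face of degree at most $\Delta^*(G)$, so a palette of size $\lfloor\frac{3}{2}\Delta^*(G)\rfloor \ge \Delta^*(G)$ leaves enough room to separate the two color classes by permuting one side, which is exactly the point where the target inequality is used. I expect that once this merging step is isolated as a lemma, both the connectivity reduction and the separating-cycle reduction become instances of it, and Theorem \ref{thm:3} follows.
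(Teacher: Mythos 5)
Your proposal is correct and follows essentially the same route as the paper: reduce to the $2$-connected case, dispose of graphs with at most three faces via Lemma \ref{le:1}, and otherwise apply Theorem \ref{thm:2} to either invoke Conjecture \ref{CCC:2} directly or split along a separating cycle $C_n$ with $n\le\Delta^*(G)$ and recombine the two inductive colorings by permuting one palette so that they agree on the rainbow cycle. The only differences are cosmetic: you induct on $|V(G)|+|E(G)|$ rather than taking a counterexample minimal first in faces and then in vertices, and you spell out the cut-vertex recombination that the paper dismisses as ``easy to see.''
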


\begin{proof}
It is enough to prove that Conjecture \ref{CCC:1} follows from Conjecture \ref{CCC:2}. Suppose it is not true.

Let $H$ be a counterexample with minimum number of faces and then with the minimum number of vertices. It is easy to see that $H$ is $2$-connected, has at least four faces (because Lemma \ref{le:1}), and is not  a subdivision of any simple $3$-connected plane graph. Then, by Theorem \ref{thm:2}, $H$ contains a separating cycle $C_n$, with $n \leq \Delta^*(H)$. The graphs 
$H_1 = C_n \cup \mathrm {int}{_G(C_n)}$ and $H_2 = C_n \cup \mathrm {ext}{_G(C_n)}$, are smaller than $H$, therefore, for $i \in \{1, 2\}$ we have $$\chi_c(H_i) \leq \lfloor \frac{3}{2}\Delta^*(H_i)\rfloor.$$

Since the graphs $H_1$ and $H_2$ have only vertices of the cycle $C_n$ in common, $n \leq \Delta^*(H)$, and each face of $H$ is also a face in $H_1$ or in $H_2$, there is $\Delta^*(H_i) \leq \Delta^*(H)$. So we can combine the cyclic colorings of $H_1$ and $H_2$ to obtain a cyclic coloring of $H$ using at most $\frac{3}{2}\Delta^*(H)$ colors. A contradiction.
\end{proof}

\section{Cyclic colorings of subdivisions of simple $3$-connected plane graphs}

As in any simple $3$-connected plane graph every two distinct faces have at most one edge in common, for any subdivision $G$ of a simple $3$-connected plane graph we have $k^* = t+2, t = t(G)$. As a result, from Theorem \ref{thm:k^*}, we have:

\begin{theorem}\label{thm:4}
If $G$ is a subdivision of a simple $3$-connected plane graph with $\Delta^*(G) \geq 5$, then
$$\chi_c(G) \leq \max\{\Delta^*(G) + 3t + 8,\Delta^*(G) + 14\}.$$
\end{theorem}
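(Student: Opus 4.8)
The plan is to read this off directly from Theorem~\ref{thm:k^*}, whose hypothesis $\Delta^*(G)\geq 5$ coincides with the one assumed here; the only thing to supply is the value of the parameter $k^*$ for a subdivision $G$ of a simple $3$-connected plane graph. I would first establish, as the text records, that $k^*(G)=t+2$ with $t=t(G)$, and then substitute.

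To pin down $k^*$, let $H$ be the simple $3$-connected plane graph of which $G$ is a subdivision. By the quoted property, any two distinct faces of $H$ share at most one edge, and under the one-to-one correspondence between $F(H)$ and $F(G)$ two faces of $G$ therefore meet along at most one of the paths into which a single edge of $H$ has been subdivided. Such a path consists of its two end vertices, each of degree at least three, together with the degree-$2$ vertices in its interior; by the definition of $t$ the longest run of degree-$2$ vertices has exactly $t$ vertices. Hence two faces share at most $t+2$ vertices, and the two faces flanking a longest degree-$2$ path attain this, so $k^*=t+2$.

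With $k^*=t+2$ in hand the conclusion is immediate: substituting into the bound of Theorem~\ref{thm:k^*} gives
$$\chi_c(G)\leq\max\{\Delta^*(G)+3(t+2)+2,\ \Delta^*(G)+14\}=\max\{\Delta^*(G)+3t+8,\ \Delta^*(G)+14\},$$
which is exactly the asserted inequality. I expect no real obstacle here, since both Theorem~\ref{thm:k^*} and the evaluation of $k^*$ are available; the statement is a corollary of the bound of \cite{BBGH-07}. The one point worth a moment's care is confirming that two faces of $G$ gain nothing by meeting outside a single subdivided edge—a shared vertex of degree at least three contributes just one vertex, which does not exceed $t+2$ for $t\geq 0$—and this follows at once from the at-most-one-edge property of $H$.
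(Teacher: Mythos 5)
Your proposal is correct and takes essentially the same route as the paper: it establishes $k^*(G)=t+2$ from the fact that two faces of a simple $3$-connected plane graph share at most one edge (hence two faces of $G$ meet along at most one subdivided edge, contributing at most $t+2$ vertices), and then substitutes into Theorem~\ref{thm:k^*}. This is precisely the derivation the paper gives in the remark preceding the theorem, with your extra check about faces meeting only in a vertex being a harmless (and welcome) clarification.
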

 
\begin{corollary}\label{cor:1}
If $G$ is a subdivision of a simple $3$-connected plane graph with $\Delta^*(G) \geq \max\{6t + 16, 28\}$, then $$\chi_c(G) \leq \lfloor \frac{3}{2}\Delta^*(G)\rfloor.$$

\end{corollary}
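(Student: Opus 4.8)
The plan is to obtain the corollary as an immediate consequence of Theorem \ref{thm:4}, reducing the statement to the verification of two elementary inequalities involving the floor function. Since the hypothesis $\Delta^*(G) \geq \max\{6t+16,28\}$ forces $\Delta^*(G) \geq 28 \geq 5$, Theorem \ref{thm:4} applies and yields $\chi_c(G) \leq \max\{\Delta^*(G) + 3t + 8, \Delta^*(G) + 14\}$. Hence it suffices to show that each of the two terms in this maximum is at most $\lfloor \frac{3}{2}\Delta^*(G)\rfloor$.

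Writing $\Delta = \Delta^*(G)$, the first step is to bound the term $\Delta + 3t + 8$. When $\Delta$ is even, $\lfloor \frac{3}{2}\Delta \rfloor = \frac{3\Delta}{2}$, and the desired inequality $\Delta + 3t + 8 \leq \frac{3\Delta}{2}$ is equivalent to $\Delta \geq 6t + 16$, which is precisely the hypothesis. The second step is the term $\Delta + 14$; when $\Delta$ is even, the inequality $\Delta + 14 \leq \frac{3\Delta}{2}$ rearranges to $\Delta \geq 28$, again exactly the hypothesis.

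The only point requiring care, and the step I would treat most explicitly, is the odd case, where $\lfloor \frac{3}{2}\Delta\rfloor = \frac{3\Delta-1}{2}$ and the extra $-1$ could in principle spoil the estimates. Here the decisive observation is a parity one: both bounds $6t+16$ and $28$ are even, so if $\Delta$ is odd and satisfies $\Delta \geq 6t+16$ (resp.\ $\Delta \geq 28$), then in fact $\Delta \geq 6t+17$ (resp.\ $\Delta \geq 29$). These sharpened inequalities are exactly what the computation demands: $\Delta + 3t + 8 \leq \frac{3\Delta-1}{2}$ is equivalent to $\Delta \geq 6t+17$, and $\Delta + 14 \leq \frac{3\Delta-1}{2}$ is equivalent to $\Delta \geq 29$. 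Combining the even and odd cases, both terms of the maximum are bounded above by $\lfloor \frac{3}{2}\Delta \rfloor$, which completes the argument. There is no substantive obstacle beyond this bookkeeping; the entire content of the corollary lies in Theorem \ref{thm:4}, and the constants $6t+16$ and $28$ in the hypothesis are tuned precisely so that the even-case thresholds coincide with the stated bounds while the even parity of those thresholds silently supplies the extra unit needed in the odd case.
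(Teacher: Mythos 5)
Your proposal is correct and is exactly the argument the paper intends: the corollary is stated without proof as an immediate consequence of Theorem \ref{thm:4}, and your verification of the two inequalities, including the parity observation that the even thresholds $6t+16$ and $28$ absorb the extra unit lost in $\lfloor \frac{3}{2}\Delta^*\rfloor$ when $\Delta^*$ is odd, is the complete and correct bookkeeping.
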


\bigskip

Let $G$ be a $2$-connected plane graph with the reduction $R = R(G)$ being a simple $3$-connected plane graph. 
We associate with the graph $G$ the following plane multigraph $S = S(G)$, called the {\it{subdivision multigraph}} of $G$, whose vertex set $V(S) = F(G)$, the face set of $G$, and edge set $E(S) = V_2(G)$, the set of vertices of degree 2 of $G$.
The edge $v$ joins the vertices $f_a$ and $f_b$ in $S$ if and only if the 2-vertex $v$ is incident, in $G$, with both faces $f_a$ and $f_b$. Observe that the multigraph $S$ has the maximum vertex degree 
$\Delta(S) = \max_{f\in F(G)}\{ \deg_G(f) - \deg_R(f')\}$ where $f'$ is the face of $R$ corresponding to the face $f$ of $G$. Let 
$\chi'(S)$ denote the chromatic index of the subdivision multigraph of $S$.

\begin{theorem}\label{thm:6}
Let $G$ be a subdivision of a simple $3$-connected plane graph $R$ (which is the reduction $R = R(G)$) and let $S=S(G)$ be the subdivision multigraph of $G$. Then 
$$\chi_c(G) \leq \chi'(S) + \chi_c(R). $$
Moreover, the bound is tight.
\end{theorem}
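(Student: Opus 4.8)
The plan is to build a cyclic coloring of $G$ from two ingredients that use disjoint sets of colors: a cyclic coloring of the reduction $R$ for the vertices of degree at least three, and a proper edge coloring of the subdivision multigraph $S$ for the vertices of degree two. First I would fix a cyclic coloring $c_R$ of $R$ with the palette $\{1,\dots,\chi_c(R)\}$ and give each vertex $v\in V_{\geq 3}(G)=V(R)$ the color $c_R(v)$. Then, using the disjoint palette $\{\chi_c(R)+1,\dots,\chi_c(R)+\chi'(S)\}$, I would fix a proper edge coloring $c_S$ of $S$ and give each $2$-vertex $v\in V_2(G)=E(S)$ the color $c_S(v)$. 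By construction exactly $\chi_c(R)+\chi'(S)$ colors are used.

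The decisive step is to check that this is indeed a cyclic coloring of $G$. I would take an arbitrary face $f$ of $G$ together with two distinct vertices $x,y$ incident with $f$ and distinguish three cases by degree. If $\deg_G(x),\deg_G(y)\geq 3$, then under the one-to-one correspondence $F(G)\leftrightarrow F(R)$ both $x$ and $y$ are incident, in $R$, with the face $f'$ corresponding to $f$; since $c_R$ is cyclic, $c_R(x)\neq c_R(y)$. If exactly one of $x,y$ has degree two, their colors come from disjoint palettes and are trivially distinct. If $\deg_G(x)=\deg_G(y)=2$, then each of them, being incident with precisely the two faces separated by its subdivision path, is an edge of $S$ having $f$ as one endpoint; hence $x$ and $y$ are two distinct edges of $S$ meeting at the vertex $f$, so $c_S(x)\neq c_S(y)$ because $c_S$ is proper. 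This last case also handles parallel edges of $S$, that is, two $2$-vertices lying between the very same pair of faces. This proves $\chi_c(G)\leq\chi'(S)+\chi_c(R)$.

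For the tightness I would reuse the family from Lemma \ref{le:2}. Take $R=D_3$, the triangular prism, and let $H$ be obtained by subdividing each of its three edges joining the two triangles into a path of length $t+1$; then $R=R(H)$ is simple and $3$-connected. A direct check shows that every two of the six vertices of $D_3$ share a face, so $\chi_c(R)=6$, and after the subdivision each quadrilateral face has degree $2t+4$, whence $\Delta^*(H)=2t+4$. The subdivision multigraph $S=S(H)$ is the triangle on the three quadrilateral faces in which each of the three sides carries $t$ parallel edges (one per $2$-vertex on the corresponding path), the two triangular faces being isolated vertices; since in a triangle any two edges share a vertex, all $3t$ edges of $S$ must receive distinct colors and $\chi'(S)=3t$. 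Therefore $\chi'(S)+\chi_c(R)=3t+6=\lfloor\frac{3}{2}\Delta^*(H)\rfloor=\chi_c(H)$, the last equality being Lemma \ref{le:2}, so the bound is attained for every $t\geq 0$.

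I expect the only delicate point to be the bookkeeping in the degree-two case of the verification: one has to be sure that the cyclic-coloring constraints among $2$-vertices are captured \emph{exactly} by edge-adjacency in $S$, parallel edges included. This rests entirely on the elementary fact that a degree-two vertex of a $2$-connected plane graph is incident with precisely two faces, which are exactly the two endpoints of the corresponding edge of $S$; granting this, the remaining two cases and the tightness computation are routine consequences of the face correspondence $F(G)\leftrightarrow F(R)$ and of the disjointness of the two palettes.
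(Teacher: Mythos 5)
Your proposal is correct and follows essentially the same route as the paper: color $V_{\geq 3}(G)$ by a cyclic coloring of $R$ and $V_2(G)$ by a proper edge coloring of $S$ on a disjoint palette, then certify tightness with the subdivided triangular prism (the paper allows three paths of possibly different lengths $a+1$, $b+1$, $c+1$, but your equal-length special case suffices). Your case analysis of the verification step is more detailed than the paper's, which simply asserts that the two colorings together give a cyclic coloring, but the underlying argument is identical.
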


\begin{proof}
Let $\phi: V(R) \rightarrow \{1,\dots, \chi_c(R)\}$ be a cyclic coloring of $R$. 
 
Let $\psi: V_2(G) = E(S)\rightarrow \{\chi_c(R) + 1,\dots, \chi_c(R) + \chi'(S)\}$ be a proper edge-coloring of $S$
with $\chi'(S)$ colors. The edge-coloring $\psi$ induces a vertex-coloring  of vertices of $V_2(G)$ in which every two distinct vertices $v_1$ and $v_2$ of degree 2, that are incident with the same face $f$, receive different colors. 
  It is easy to see that the colorings $\phi$ and $\psi$ together give a cyclic coloring of $G$.

The bound is tight for any $2$-connected plane graph in which any two its vertices are cyclically adjacent. For an example of such a graph $H$ consider the triangular prism $D_3$ with three edges joining two triangles replaced by three paths of lengths 
$a+1$, $b+1$, and $c+1$. It is easy to see that 
$$\chi_c(H) = (a + b + c) + 6  = \chi'(S(H)) + \chi_c(R(H)). $$
\end{proof}

\begin{theorem}\label{thm:7}
If $G$ is a subdivision of a $3$-connected plane graph $R$, then 
$$\chi_c(G) \leq \lfloor \frac32 \max_{f\in F(G)}\{ \deg_G(f) -\deg_R(f')\}\rfloor + \chi_c(R),$$

where $f'$ is the face of $R$ corresponding to the face $f$ of $G$. 
Moreover, the bound is tight.

\end{theorem}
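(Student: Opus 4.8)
The plan is to bound the chromatic index of the subdivision multigraph $S = S(G)$ and then invoke Theorem \ref{thm:6}. Since Theorem \ref{thm:6} gives $\chi_c(G) \leq \chi'(S) + \chi_c(R)$, and the target bound has $\lfloor \frac{3}{2} \max_{f\in F(G)}\{\deg_G(f) - \deg_R(f')\}\rfloor$ in place of $\chi'(S)$, it suffices to show that
$$\chi'(S) \leq \left\lfloor \tfrac{3}{2}\,\Delta(S)\right\rfloor,$$
because we already observed that $\Delta(S) = \max_{f\in F(G)}\{\deg_G(f) - \deg_R(f')\}$. So the whole theorem reduces to a statement purely about edge-coloring the plane multigraph $S$.

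The key step is therefore to recognize that $S$ is a planar multigraph (it is drawn in the plane with vertex set $F(G)$, the faces of $G$) with \emph{no loops}, since a degree-$2$ vertex of $G$ is incident with two \emph{distinct} faces. First I would appeal to Shannon's theorem, which states that for any loopless multigraph $M$ one has $\chi'(M) \leq \lfloor \frac{3}{2}\Delta(M)\rfloor$. Applying Shannon's bound to $S$ gives exactly $\chi'(S) \leq \lfloor \frac{3}{2}\Delta(S)\rfloor$, and combining this with Theorem \ref{thm:6} yields the desired inequality. I would need to confirm carefully that $S$ has no loops: an edge of $S$ corresponds to a $2$-vertex $v$ of $G$, and its endpoints are the two faces of $G$ incident with $v$; these are distinct because $R$ is $3$-connected (so in particular $2$-connected and not a cycle), which rules out a $2$-vertex lying on only one face.

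For the tightness claim I would exhibit an explicit family matching the bound, mirroring the construction already used in Lemma \ref{le:2} and in the proof of Theorem \ref{thm:6}: take the triangular prism $D_3$ and subdivide each of the three connecting edges into paths so that all the degree-$2$ vertices land between the same pair of faces, forcing the corresponding edges of $S$ to form a multiple edge (a ``fat edge'') realizing Shannon's extremal configuration. Choosing the path lengths so that the resulting $S$ is a triangle with all edges multiplied equally produces a multigraph where $\chi'(S) = \lfloor \frac{3}{2}\Delta(S)\rfloor$ with equality, and where every pair of vertices of the subdivided prism is cyclically adjacent so that $\chi_c(R)$ and $\chi_c(G)$ behave as computed before.

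The main obstacle I anticipate is not the edge-coloring step itself — Shannon's theorem does the heavy lifting — but rather the bookkeeping needed to verify two structural facts about $S$: that $S$ is genuinely loopless (handled above via $3$-connectivity), and, for tightness, that the extremal prism construction simultaneously realizes Shannon's extremal triangle in $S$ \emph{and} keeps all vertices of $G$ mutually cyclically adjacent so that $\chi_c(G) = \chi'(S) + \chi_c(R)$ holds with equality rather than merely the inequality from Theorem \ref{thm:6}. Getting the path lengths and the identification of faces exactly right so that $\Delta(S)$ is attained on a triple edge is the delicate part of the argument.
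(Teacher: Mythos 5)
Your proposal is correct and follows essentially the same route as the paper: apply Shannon's bound $\chi'(S)\leq\lfloor\frac32\Delta(S)\rfloor$ to the subdivision multigraph and combine with Theorem \ref{thm:6}, with tightness witnessed by the subdivided triangular prism with $a=b=c\geq 2$ (the fat triangle realizing Shannon's extremal case). Your extra check that $S$ is loopless is a reasonable piece of due diligence that the paper leaves implicit.
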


\begin{proof}
By theorem of Shannon \cite{Sha-49} it holds that $\chi'(S) \leq \lfloor\frac32 \Delta(S)\rfloor.$
To see the tightness, consider the graph $H$ of the proof of Theorem \ref{thm:6} with $a = b = c \geq 2$.
\end{proof}

 Next corollary provides three other wide families of graphs for which Borodin's conjecture holds.

\begin{corollary}
If $G$ is a subdivision of a plane triangulation, a simple $3$-connected plane quadrangulation, or a simple $3$-connected plane pentagulation (all faces are of degree 5) with even maximum  face degree, then 

$$\chi_c(G) \leq \lfloor \frac32 \Delta^*(G)\rfloor.$$
\end{corollary}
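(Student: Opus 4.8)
The plan is to apply Theorem \ref{thm:7} to the base graph $R$ of which $G$ is a subdivision, using the crucial feature that all faces of $R$ have one and the same degree. Write $\Delta^{*}=\Delta^{*}(G)$ and let $p\in\{3,4,5\}$ be the common face degree of $R$ (a triangulation, quadrangulation, or pentagulation). Since the face correspondence $f\mapsto f'$ sends every face $f$ of $G$ to a face $f'$ of $R$ with $\deg_R(f')=p$, the maximum appearing in Theorem \ref{thm:7} collapses:
$$\max_{f\in F(G)}\{\deg_G(f)-\deg_R(f')\}=\left(\max_{f\in F(G)}\deg_G(f)\right)-p=\Delta^{*}-p.$$
As $R$ is $3$-connected (a plane triangulation on at least four vertices is simple and $3$-connected; the quadrangulation and pentagulation are assumed so), Theorem \ref{thm:7} applies and gives
$$\chi_c(G)\leq\left\lfloor\frac{3}{2}(\Delta^{*}-p)\right\rfloor+\chi_c(R).$$

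Next I would record the three constant bounds on $\chi_c(R)$. For $p=3$ we have $\Delta^{*}(R)=3$, so the Four Color Theorem yields $\chi_c(R)\leq 4$; for $p=4$ we have $\Delta^{*}(R)=4$, whence $\chi_c(R)\leq 6$; for $p=5$ we have $\Delta^{*}(R)=5$, whence $\chi_c(R)\leq 8$. All three are quoted in the introduction and also follow from Theorem \ref{thm:+r}. Setting $c_3=4$, $c_4=6$, $c_5=8$, the corollary reduces to the elementary floor inequality
$$\left\lfloor\frac{3}{2}(\Delta^{*}-p)\right\rfloor+c_p\leq\left\lfloor\frac{3}{2}(\Delta^{*}-p)+\frac{3p}{2}\right\rfloor,$$
obtained by writing $\lfloor\frac32\Delta^{*}\rfloor=\lfloor\frac32(\Delta^{*}-p)+\frac{3p}{2}\rfloor$.

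Finally I would settle this inequality by comparing $c_p$ with $\frac{3p}{2}$. For $p=4$ the shift $\frac{3p}{2}=6=c_4$ is an integer, so subtracting it commutes with the floor and equality holds for every parity of $\Delta^{*}$. For $p=3$ we have $c_3=4<4.5=\frac{3p}{2}$, and since enlarging the added constant only raises the floor, the inequality again holds for every parity. For $p=5$, however, $c_5=8>7.5=\frac{3p}{2}$: when $\Delta^{*}$ is even, $\frac32(\Delta^{*}-5)$ is a half-integer whose fractional part $0.5$ combines with $+7.5$ to supply the missing unit, so the inequality holds with equality; when $\Delta^{*}$ is odd it fails by exactly one. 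This is precisely why the even maximum face degree hypothesis is needed only in the pentagulation case.

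The computation is routine; the one substantive observation is the first, that the constancy of the face degrees of $R$ turns $\max_f\{\deg_G(f)-\deg_R(f')\}$ into $\Delta^{*}(G)-p$, which is exactly what lets the sharp per-face estimate of Theorem \ref{thm:7} defeat the constant slack in $\chi_c(R)$. The only delicate point is the parity bookkeeping for $p=5$, which both forces the evenness assumption there and explains why triangulations and quadrangulations need no such restriction.
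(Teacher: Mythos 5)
Your proposal is correct and follows essentially the same route as the paper: apply Theorem \ref{thm:7} with $\max_f\{\deg_G(f)-\deg_R(f')\}=\Delta^*(G)-p$ for $p\in\{3,4,5\}$, insert the known constants $\chi_c(R)\leq 4,6,8$, and check the floor arithmetic (the paper does this by direct computation rather than your explicit comparison of $c_p$ with $3p/2$, but the content is identical). Your parity discussion for $p=5$ correctly pinpoints why the evenness hypothesis is needed exactly in the pentagulation case.
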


\begin{proof}
We apply Theorem \ref{thm:7}. 
If $G$ is a triangulation, then $\chi_c(G) \leq \lfloor \frac{3}{2}(\Delta^*(G)- 3)\rfloor + 4 = \lfloor \frac{3\Delta^* - 9 + 8}{2}\rfloor \leq \lfloor\frac32 \Delta^*(G)\rfloor.$ 

If $G$ is a simple $3$-connected plane quadrangulation, then $\chi_c(G) \leq \lfloor \frac{3}{2}(\Delta^*(G)- 4)\rfloor + 6 = \lfloor \frac{3\Delta^* - 12 + 12}{2}\rfloor = \lfloor\frac32 \Delta^*(G)\rfloor.$

If $G$ is a subdivision of a simple $3$-connected plane pentagulation of even maximum face degree, then $\chi_c(G) \leq \lfloor \frac{3}{2}(\Delta^*(G)- 5)\rfloor + 8 = \lfloor \frac{3\Delta^* - 15 + 16}{2}\rfloor = \lfloor\frac32 \Delta^*(G)\rfloor.$
   
\end{proof}

\bigskip
\section{One more upper bound on cyclic chromatic number}

\begin{theorem}\label{thm:8}
If $G$ is a subdivision of a simple $3$-connected plane graph $R$, then 
$$\chi_c(G) \leq \max_{f\in F(G)}\{ \deg_G(f) -\deg_R(f')\} + t(G) + \chi_c(R),$$

where $f'$ is the face of $R$ corresponding to the face $f$ of $G$.
Moreover, the bound is tight.

\end{theorem}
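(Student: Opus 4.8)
The plan is to prove Theorem~\ref{thm:8} by exhibiting an explicit cyclic coloring of $G$ that reuses the cyclic coloring of the reduction $R$ and colors the degree-2 vertices with a small palette of additional colors, the size of which is governed by the two parameters appearing in the statement. The key quantity to control is $\Delta(S)=\max_{f\in F(G)}\{\deg_G(f)-\deg_R(f')\}$, the maximum number of degree-2 vertices lying on any single face of $G$; this is exactly the maximum number of degree-2 vertices that must pairwise receive distinct colors within one face. I would first fix a cyclic coloring $\phi\colon V(R)\to\{1,\dots,\chi_c(R)\}$, which already legally colors all vertices of $V_{\geq3}(G)$ since faces of $G$ correspond bijectively to faces of $R$ and the boundary vertices of degree at least $3$ correspond to the vertices of $R$. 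It then remains to color the set $V_2(G)$ of degree-2 vertices with colors from $\{\chi_c(R)+1,\dots\}$ so that, on every face, the degree-2 vertices are mutually distinctly colored and distinct from the $\phi$-colors already present on that face; the latter is automatic once we use a disjoint palette.

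The heart of the argument is therefore a coloring of the degree-2 vertices using only $\Delta(S)+t(G)$ new colors, rather than the $\lfloor\frac32\Delta(S)\rfloor$ colors that Shannon's bound gave us in Theorem~\ref{thm:7}. Here I would exploit the finer structure recorded by the subdivision multigraph $S=S(G)$, whose edges are precisely the degree-2 vertices and whose vertices are the faces of $G$: two degree-2 vertices must be distinctly colored exactly when the corresponding edges of $S$ share an endpoint, so a proper edge coloring of $S$ is equivalent to a valid coloring of $V_2(G)$. Each edge of $S$ corresponds to a maximal degree-2 path of $G$ sitting inside one original edge of $R$, hence to a parallel class of $S$, and all degree-2 vertices on one such path lie on the same two faces; consequently the multiplicity of any edge of $S$ is at most $t(G)$. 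I would then invoke an improvement of Shannon's theorem that is sensitive to edge multiplicity — Vizing's bound for multigraphs, $\chi'(S)\le\Delta(S)+\mu(S)$, where $\mu(S)$ is the maximum edge multiplicity — to conclude $\chi'(S)\le\Delta(S)+t(G)$. Combined with the disjoint-palette coloring of $R$, this yields the claimed bound $\chi_c(G)\le\Delta(S)+t(G)+\chi_c(R)$.

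The main obstacle I anticipate is verifying the multiplicity bound $\mu(S)\le t(G)$ and confirming that Vizing's multigraph bound applies cleanly in this planar multigraph setting, since one must be careful that a single maximal degree-2 path of length up to $t(G)+1$ contributes exactly $t(G)$ parallel edges (one per interior degree-2 vertex), and that no two distinct maximal degree-2 paths between the same pair of faces inflate the multiplicity beyond $t(G)$; this requires using that $R$ is \emph{simple}, so that any two faces of $R$ share at most one edge, which forces all parallel edges of $S$ between two fixed faces to come from a single path. For tightness I would return to the family from the proofs of Theorems~\ref{thm:6} and~\ref{thm:7}: the prism $D_3$ with its three connecting edges subdivided into three paths. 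Choosing the path lengths so that one path realizes the maximum $t(G)$ while the face-degree deficits force $\Delta(S)$ to its extreme value should produce an instance in which $\chi_c(H)$ equals $\Delta(S)+t(H)+\chi_c(R)$ exactly, because in such a graph every two vertices are cyclically adjacent, so $\chi_c(H)$ equals the total vertex count minus one and can be matched term by term with the right-hand side.
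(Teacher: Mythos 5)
Your proposal is correct and follows essentially the same route as the paper: it reuses the disjoint-palette construction from Theorems~\ref{thm:6} and \ref{thm:7}, replaces Shannon's bound by the Vizing--Gupta bound $\chi'(S)\leq\Delta(S)+\mu(S)$ together with $\mu(S)=t(G)$ (justified, as you note, by the simplicity of $R$), and verifies tightness on the same subdivided prism example.
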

 
\begin{proof}
We use for $\chi'(S)$ in the proof of Theorem \ref{thm:7}, instead of the Shannon bound,  the bound of Vizing and Gupta (see \cite{West-01}, p. 275), saying that $\chi'(S) \leq \Delta(S) + \mu(S)$ where 
$\mu(S)$ denotes the maximum edge multiplicity of $S$, and the fact that $\mu(S) = t(G)$. To see the tightness, consider the graph $H$ from the proof of Theorem \ref{thm:6} with $a = b = c = t(H) \geq 2$.
\end{proof}

\begin{corollary}\label{cor:2}
If $G$ is a subdivision of a simple $3$-connected plane graph $R$ with $\Delta^*(G) \geq 2\chi_c(R) + 2t(G) - 6$, then 
$$\chi_c(G) \leq \lfloor \frac{3}{2}\Delta^*(G)\rfloor.$$

\end{corollary}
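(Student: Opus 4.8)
The plan is to feed Theorem~\ref{thm:8} into a short piece of floor arithmetic. Write $M = \max_{f\in F(G)}\{\deg_G(f) - \deg_R(f')\}$ and $t = t(G)$, so that Theorem~\ref{thm:8} gives directly $\chi_c(G) \leq M + t + \chi_c(R)$. Everything then hinges on bounding $M$ in terms of $\Delta^*(G)$ and on comparing the resulting quantity with $\lfloor \tfrac{3}{2}\Delta^*(G)\rfloor$.

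The key observation is a lower bound on the face degrees of $R$. For every face $f$ of $G$ we have $\deg_G(f) \leq \Delta^*(G)$, while the corresponding face $f'$ of the simple $3$-connected plane graph $R$ satisfies $\deg_R(f') \geq 3$, since a simple $3$-connected plane graph has no face of degree $2$ (it is neither a cycle nor a graph with a $2$-face). Hence every term in the maximum defining $M$ is at most $\Delta^*(G) - 3$, so $M \leq \Delta^*(G) - 3$, and therefore $\chi_c(G) \leq \Delta^*(G) - 3 + t + \chi_c(R)$.

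It remains to verify that the right-hand side does not exceed $\lfloor \tfrac{3}{2}\Delta^*(G)\rfloor$. Using $\lfloor \tfrac{3}{2}\Delta^*(G)\rfloor = \Delta^*(G) + \lfloor \tfrac{1}{2}\Delta^*(G)\rfloor$, the desired inequality reduces to $\chi_c(R) + t - 3 \leq \lfloor \tfrac{1}{2}\Delta^*(G)\rfloor$. The hypothesis $\Delta^*(G) \geq 2\chi_c(R) + 2t(G) - 6$ says precisely that $\tfrac{1}{2}\Delta^*(G) \geq \chi_c(R) + t - 3$; since the right-hand side $\chi_c(R) + t - 3$ is an integer, one may pass to the floor to obtain $\lfloor \tfrac{1}{2}\Delta^*(G)\rfloor \geq \chi_c(R) + t - 3$, which closes the chain of inequalities.

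The only points requiring care are the two mentioned above: the lower bound $\deg_R(f') \geq 3$, which is exactly where simplicity and $3$-connectivity of $R$ enter, and the legitimacy of replacing the real inequality by its floor version, which is valid only because $\chi_c(R) + t - 3$ is an integer. Both are routine, and I expect no genuine obstacle; the corollary is essentially a one-line consequence of Theorem~\ref{thm:8} once these two facts are noted.
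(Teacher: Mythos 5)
Your proof is correct and follows the route the paper intends: the corollary is stated without proof, but it is exactly the combination of Theorem~\ref{thm:8} with the bound $\deg_R(f')\geq 3$ (so $M\leq\Delta^*(G)-3$) and the floor arithmetic you carry out, the same computation the authors perform explicitly for triangulations and quadrangulations later in the paper. Your care about passing to the floor because $\chi_c(R)+t-3$ is an integer is the right justification for the parity-sensitive step.
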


Note that if $\Delta^*(R) \leq 2t(G) + 6$ for $t \geq 2$ or $\Delta^*(R) \leq 12-t$ for $t \leq 1$, then the bound of 
Theorem \ref{thm:8} is better than that of Theorem \ref{thm:4} (and, hence, of Theorem \ref{thm:k^*}).
\bigskip

The graph $G$ is a \textit{regular} subdivision of a graph $R$ if it is obtained from $R$ by replacing each edge of $R$ with a path of length $k+1$ for some constant $k \geq 0$.

\begin{corollary}\label{thm:9}
If $G$ is a regular subdivision of a simple $3$-connected plane graph $R$, then 
$$\chi_c(G) \leq \Delta^*(G) + t(G) + r,$$

where $r$ is a constant depending on the properties of $R$, see Theorem \ref{thm:+r}.
\end{corollary}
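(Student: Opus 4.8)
The plan is to obtain this as a direct specialization of Theorem~\ref{thm:8} to the case where $G$ is a \emph{regular} subdivision, where every edge of $R$ is replaced by a path of length $k+1$. The key observation is that regularity turns the awkward quantity $\max_{f \in F(G)}\{\deg_G(f) - \deg_R(f')\}$ into something directly controllable by $\Delta^*(G)$. First I would compute, for a face $f$ of $G$ corresponding to a face $f'$ of $R$ of degree $d = \deg_R(f')$, how many degree-$2$ vertices lie on $f$. Since each of the $d$ edges bounding $f'$ is subdivided into a path of length $k+1$, contributing $k$ interior degree-$2$ vertices, we get $\deg_G(f) = d(k+1) = d + dk$, so $\deg_G(f) - \deg_R(f') = dk$. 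Hence $\max_f\{\deg_G(f) - \deg_R(f')\} = k\,\Delta^*(R)$, which is attained at a maximum-degree face of $R$.

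Next I would relate this to $\Delta^*(G)$. From $\deg_G(f) = d(k+1)$ we have $\Delta^*(G) = (k+1)\Delta^*(R)$, achieved at the same maximum face. Therefore
$$
\max_{f \in F(G)}\{\deg_G(f) - \deg_R(f')\} = k\,\Delta^*(R) = \frac{k}{k+1}\,\Delta^*(G) \leq \Delta^*(G),
$$
so in particular this maximum is at most $\Delta^*(G)$. Substituting the bound $\max_f\{\deg_G(f) - \deg_R(f')\} \le \Delta^*(G)$ into Theorem~\ref{thm:8} immediately yields
$$
\chi_c(G) \leq \Delta^*(G) + t(G) + \chi_c(R).
$$
It then remains only to replace $\chi_c(R)$ by an explicit constant. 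Since $R$ is a simple $3$-connected plane graph, Theorem~\ref{thm:+r} gives $\chi_c(R) \leq \Delta^*(R) + r$ for the appropriate $r$ depending on the structure of $R$; however, the stated conclusion has the shape $\Delta^*(G) + t(G) + r$ with $r$ a \emph{constant}, so I would interpret the corollary as asserting the bound with $\chi_c(R)$ itself playing the role of the structure-dependent additive term, or more precisely use that for the regimes covered by Theorem~\ref{thm:+r} one has $\chi_c(R) \le \Delta^*(R)+r$ and then absorb $\Delta^*(R)$ into $\Delta^*(G)$ via $\Delta^*(R) \le \Delta^*(G)$, being careful about what ``constant'' is claimed.

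The main obstacle, and the step I would scrutinize, is precisely this last bookkeeping with the additive constant. The clean inequality from Theorem~\ref{thm:8} produces $\Delta^*(G) + t(G) + \chi_c(R)$, and $\chi_c(R)$ is not itself a constant but grows with $\Delta^*(R)$; so to land on a genuinely constant $r$ one must either (i) read the corollary as reporting the $r$ from Theorem~\ref{thm:+r} directly (treating $\chi_c(R) - \Delta^*(R)$ as the constant and noting $\Delta^*(R) \le \Delta^*(G)$ would double-count a $\Delta^*$ term), or (ii) restrict attention to the hypotheses of Theorem~\ref{thm:+r} and verify that the intended statement is $\chi_c(G) \le \Delta^*(G) + t(G) + r$ only when the face of $R$ realizing $\chi_c(R)$'s growth coincides appropriately with the maximum face of $G$. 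I expect the resolution is that the corollary intends $r$ to be exactly the additive constant of Theorem~\ref{thm:+r} and relies on the tightness structure so that $\Delta^*(R)$ is not separately added; confirming this requires matching the face attaining $\Delta^*(G)$ with the one governing $\chi_c(R)$, and that matching is the only genuinely non-routine point in the argument.
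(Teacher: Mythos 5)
Your approach is the right one (specialize Theorem~\ref{thm:8} to a regular subdivision), and your computations $\deg_G(f)-\deg_R(f')=k\deg_R(f')$, $\max_f\{\deg_G(f)-\deg_R(f')\}=k\,\Delta^*(R)$, $t(G)=k$, and $\Delta^*(G)=(k+1)\Delta^*(R)$ are all correct and are exactly the ones the paper uses. But you do not finish the proof: the ``bookkeeping obstacle'' you flag at the end has a purely arithmetic resolution that you miss, and the resolution you speculate about (matching the face attaining $\Delta^*(G)$ with ``the one governing $\chi_c(R)$'', appealing to tightness structure) is a red herring --- nothing of that sort is needed.

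The mistake is that you weaken $k\,\Delta^*(R)$ to $\Delta^*(G)=(k+1)\Delta^*(R)$ \emph{before} dealing with $\chi_c(R)$. That throws away exactly $\Delta^*(R)$ worth of slack, which is precisely the amount needed to absorb the $\Delta^*(R)$ hidden inside $\chi_c(R)$. The paper instead keeps the exact value: by Theorem~\ref{thm:+r}, $\chi_c(R)\le\Delta^*(R)+r$ with $r$ an absolute constant (between $1$ and $5$ depending on the structure of $R$), so Theorem~\ref{thm:8} gives
$$\chi_c(G)\ \le\ k\,\Delta^*(R)\ +\ t(G)\ +\ \Delta^*(R)+r\ =\ (k+1)\Delta^*(R)+t(G)+r\ =\ \Delta^*(G)+t(G)+r.$$
So $r$ is indeed the constant from Theorem~\ref{thm:+r}, $\chi_c(R)$ is \emph{not} a constant and is not meant to be one, and the two copies of $\Delta^*(R)$ (one from the subdivision term, one from the bound on $\chi_c(R)$) recombine into the single $\Delta^*(G)$. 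Your alternative (i) correctly senses that $\chi_c(R)-\Delta^*(R)$ is the constant, but your worry about ``double-counting a $\Delta^*$ term'' is unfounded: the identity $k\,\Delta^*(R)+\Delta^*(R)=\Delta^*(G)$ shows the two terms are meant to be added, not that one is being counted twice.
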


\begin{proof}
It is easy to see that for any face $f$ of $G$ there is $\deg_G(f) = \deg_R(f')(k +1)$ for some integer $k \geq 0$, where $f'$ is the face of $R$ corresponding to the face $f$. Then $t(G) = k$ and, by Theorem \ref{thm:8}, 
$\chi_c(G) \leq \max_{f\in F(G)}\{ \deg_G(f) -\deg_R(f')\} + t(G) + \Delta^*(R) + r = \max_{f\in F(G)}\{k\deg_R(f')\} + t(G) + \Delta^*(R) + r = \Delta^*(R)(k + 1) + t(G) + r = \Delta^*(G) + t(G) + r.$

\end{proof}

 
\begin{corollary}\label{thm:10}
If $G$ with $t(G) \geq 1$ is a regular subdivision of a simple $3$-connected plane graph $R$, then 
$$\chi_c(G) \leq \lfloor \frac{3}{2}\Delta^*(G)\rfloor.$$

\end{corollary}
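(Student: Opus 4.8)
The plan is to derive Corollary \ref{thm:10} directly from Corollary \ref{thm:9} by bounding the constant $r$ under the additional hypothesis $t(G) \geq 1$. Corollary \ref{thm:9} already gives $\chi_c(G) \leq \Delta^*(G) + t(G) + r$, so the target bound $\lfloor \frac{3}{2}\Delta^*(G)\rfloor$ follows as soon as we show $t(G) + r \leq \frac{1}{2}\Delta^*(G)$, i.e. $r \leq \frac{1}{2}\Delta^*(G) - t(G)$. First I would record, from the proof of Corollary \ref{thm:9}, the relation $\Delta^*(G) = \Delta^*(R)(k+1)$ with $t(G) = k \geq 1$, so that $\Delta^*(G) \geq 2\Delta^*(R)$ and, more precisely, $\frac{1}{2}\Delta^*(G) - t(G) = \frac{1}{2}\Delta^*(R)(k+1) - k$.

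The main step is a case analysis on the structure of $R$ that controls $r$ via Theorem \ref{thm:+r}. Since $R$ is a simple $3$-connected plane graph, the largest possible value is $r = 5$, occurring only when $\Delta^*(R) \leq 15$ and $R$ falls outside the listed special families. In every case I would verify $r \leq \frac{1}{2}\Delta^*(R)(k+1) - k$. For $r \leq 1$ (e.g. $\Delta^*(R) \geq 60$), the inequality is immediate since $\Delta^*(R) \geq 3$ forces $\frac{1}{2}\Delta^*(R)(k+1) - k \geq \frac{3}{2}(k+1) - k = \frac{1}{2}k + \frac{3}{2} \geq 2 > r$. The genuinely tight cases are the small-$\Delta^*(R)$ ones, where $r \in \{3,4,5\}$: for $5 \leq \Delta^*(R) \leq 6$ we have $r = 3$ and $\frac{1}{2}\Delta^*(R)(k+1) - k \geq \frac{5}{2}(k+1) - k = \frac{3}{2}k + \frac{5}{2} \geq 4 > 3$; for $\Delta^*(R) = 7$ we have $r = 4$ and the bound is $\frac{7}{2}(k+1) - k = \frac{5}{2}k + \frac{7}{2} \geq 6 > 4$; and the remaining cases $r = 5$ have $\Delta^*(R) \geq 8$, giving $4(k+1) - k = 3k + 4 \geq 7 > 5$. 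The hypothesis $t(G) = k \geq 1$ is exactly what is needed to make these go through.

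The residual small-$\Delta^*(R)$ exceptions are $\Delta^*(R) \in \{3,4\}$, where Theorem \ref{thm:+r} gives $r = 1$ and $r = 2$ respectively; these I would check separately, since $\Delta^*(R) = 3$ is small. For $\Delta^*(R) = 3$, $r = 1$ and $\frac{3}{2}(k+1) - k = \frac{1}{2}k + \frac{3}{2} \geq 2 > 1$, using $k \geq 1$. For $\Delta^*(R) = 4$, $r = 2$ and $2(k+1) - k = k + 2 \geq 3 > 2$. Thus in all cases $r \leq \frac{1}{2}\Delta^*(G) - t(G)$, and combining with Corollary \ref{thm:9} yields $\chi_c(G) \leq \Delta^*(G) + t(G) + r \leq \frac{3}{2}\Delta^*(G)$; taking the floor gives the claim.

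The main obstacle I anticipate is purely bookkeeping: ensuring that the case split over $r$ in Theorem \ref{thm:+r} is exhaustive and correctly matched to the corresponding ranges of $\Delta^*(R)$, so that no configuration of $R$ with a large $r$ but small $\Delta^*(R)$ slips through. Since every bound comfortably clears the margin once $k \geq 1$, there is no delicate arithmetic, and the whole argument reduces to the elementary inequality $\frac{1}{2}\Delta^*(R)(k+1) - k \geq r$ holding term by term across the finitely many cases.
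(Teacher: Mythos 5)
Your proposal is correct, and it follows the same basic route as the paper: apply Corollary \ref{thm:9} and reduce the claim to the inequality $t(G)+r \leq \frac{1}{2}\Delta^*(G)$. The difference lies in how the constant $r$ is handled. The paper's proof uses the single worst-case value $r=5$ together with the condition $\Delta^* \geq 2+\frac{8}{k+1}$; for the chain $\Delta^*(R)(k+1)+k+5 \leq \frac{3}{2}\Delta^*(R)(k+1)$ to go through, that condition must be read as a condition on $\Delta^*(R)$, and it is \emph{not} automatic for small reductions (for instance $\Delta^*(R)=3$ with $1\leq k\leq 6$, or $\Delta^*(R)=4$ with $k\leq 2$). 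So the paper's one-line computation silently relies on the smaller values of $r$ from Theorem \ref{thm:+r} in exactly those cases, which is the case analysis you carry out explicitly; your arithmetic in each branch ($\frac{1}{2}k+\frac{3}{2}\geq 2$ for $\Delta^*(R)=3$, $k+2\geq 3$ for $\Delta^*(R)=4$, $\frac{3}{2}k+\frac{5}{2}\geq 4>3$ for $\Delta^*(R)\in\{5,6\}$, $\frac{5}{2}k+\frac{7}{2}\geq 6>4$ for $\Delta^*(R)=7$, and $3k+4\geq 7>5$ for the residual $r=5$ range $8\leq\Delta^*(R)\leq 15$) is correct and uses $k\geq 1$ exactly where needed. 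Your version is therefore the more complete one. The only small point to tidy is the exhaustiveness you yourself flag: the $r=2$ subcases with $\Delta^*(R)\geq 5$ (e.g.\ $\Delta^*(R)\geq 16$, or $\delta(R)=4$ with $\Delta^*(R)\geq 6$) should be mentioned, though they are dominated by your $\Delta^*(R)=4$ computation since the left side $\frac{1}{2}\Delta^*(R)(k+1)-k$ is increasing in $\Delta^*(R)$.
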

\begin{proof}
Let $G$ be a regular subdivision  of the graph $R$ with $t(G) = k \geq 1$. From $\Delta^*(G) \geq 2 + \frac{8}{k+1}$ and Corollary \ref{thm:9}, we have $\chi_c(G) \leq \Delta^*(R)(k+1) +k + 5 \leq \lfloor\frac{3}{2}\Delta^*(R)(k+1)\rfloor = \lfloor\frac{3}{2}\Delta^*(G)\rfloor.$

\end{proof}

\bigskip
\section{One new conjecture}

We believe that the following conjecture, which involves both, the Plummer and Toft conjecture and the corresponding part of Conjecture 
\ref{BBC:1}, holds:

\begin{conjecture}\label{con:5}
If $G$ is a subdivision of a simple $3$-connected plane graph, then 
$$\chi_c(G) \leq \Delta^*(G) + t(G) + 2.$$
\end{conjecture}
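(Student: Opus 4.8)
The plan is to sharpen the two-stage coloring behind Theorem \ref{thm:6} by allowing the palette used on $V(R)$ to be reused on the subdivision vertices $V_2(G)$, since keeping the two palettes disjoint is exactly what forces the extra $\chi'(S)$ term to pile on top of the full $\chi_c(R)$. Observe first that the case $t(G)=0$ forces $G=R$ and reduces the statement to the Plummer--Toft bound $\chi_c(R)\le\Delta^*(R)+2$, so the conjecture can be no more accessible than that (still open) bound; I would therefore aim for a statement conditional on Plummer--Toft and then try to remove the dependence for the subdivided part. Concretely, fix a cyclic coloring $\phi\colon V(R)\to\{1,\dots,\chi_c(R)\}$ drawn from the full palette $[N]$ with $N=\Delta^*(G)+t(G)+2$, and reformulate the extension to $V_2(G)$ as a list-edge-coloring of the subdivision multigraph $S=S(G)$: the edge $v=f_af_b$ receives the list $L(v)=[N]\setminus(\phi(\partial f_a)\cup\phi(\partial f_b))$, where $\partial f$ denotes the set of branch vertices incident with the face $f$. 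A proper $L$-edge-coloring of $S$ is exactly a choice of colors on the subdivision vertices that avoids the branch colors on both incident faces and keeps the subdivision vertices of any one face distinct, i.e.\ a cyclic coloring of $G$ on the full palette.

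The decisive quantity is the list size. Since $\deg_G(f)=\deg_R(f')+d_S(f)\le\Delta^*(G)$, where $d_S(f)$ is the degree of $f$ in $S$, and since in a simple $3$-connected $R$ two faces meet in at most one edge, the branch colors are distinct on each face and one gets $|L(v)|\ge N-\deg_R(f_a')-\deg_R(f_b')$. I would then try to build the $L$-edge-coloring by ordering the edges of $S$ by decreasing $R$-degree of their endpoints: a face $f$ with large $\deg_R(f')$ necessarily carries few subdivision vertices, hence few edges of $S$, so its short lists must be honoured against only a handful of neighbours, while the low-$R$-degree faces, whose lists are long, are completed last. Galvin-type kernel arguments adapted to the planar multigraph $S$, using that $\mu(S)=t(G)$, are the tool I would reach for to carry out this completion.

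The main obstacle is that the list bound $|L(v)|\ge N-\deg_R(f_a')-\deg_R(f_b')$ drops below the local demand $d_S(f_a)$ precisely on faces where $\deg_R(f')$ is large: a short calculation gives $|L(v)|-d_S(f_a)\ge t(G)+2-\deg_R(f_b')$, which turns negative as soon as a neighbouring face has $R$-degree exceeding $t(G)+2$. This is the same phenomenon that defeats the naive combination of Theorem \ref{thm:8} with Plummer--Toft, namely that $\Delta(S)$ and $\Delta^*(R)$ may be attained at different faces and cannot both be charged in full; color reuse is meant to recover the margin, but making it work globally is the heart of the matter. Overcoming it seems to require a genuinely global argument: either a discharging scheme tailored to subdivisions showing that large-$R$-degree faces are too sparse to obstruct one another, or an inductive reducibility argument that contracts short degree-$2$ paths while controlling $t(G)$. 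I expect this step, rather than the edge-coloring bookkeeping, to be the crux, and it is presumably why the statement is posed here as a conjecture rather than proved.
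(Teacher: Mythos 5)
The statement you were asked to prove is posed in the paper as a conjecture: the authors give no proof of it, only supporting evidence. They verify it for subdivisions of plane triangulations (where Theorem~\ref{thm:8} even gives $\Delta^*(G)+t(G)+1$), for subdivisions of simple $3$-connected plane quadrangulations, and for regular subdivisions of graphs $R$ falling under the $r\le 2$ cases of Theorem~\ref{thm:+r}; and they note via Lemma~\ref{le:2} that the bound, if true, is tight. So there is no ``paper's own proof'' to match your attempt against.

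Your assessment of the situation is accurate and, in the essential point, correct: when $t(G)=0$ the graph $G$ coincides with its reduction $R$ and the statement becomes exactly the Plummer--Toft conjecture $\chi_c(R)\le\Delta^*(R)+2$, which is still open (the paper's Theorem~\ref{thm:+r} gives $+2$ only under extra hypotheses such as $\Delta^*\ge 16$). Hence no unconditional proof is available, and you were right not to claim one. Your reformulation of the extension step as a list-edge-coloring of the subdivision multigraph $S$ is a genuine sharpening of the palette-disjoint argument in Theorem~\ref{thm:6}, and your computation $|L(v)|-d_S(f_a)\ge t(G)+2-\deg_R(f_b')$ correctly isolates where the local argument fails: a face of large $R$-degree adjacent to $f_a$ eats the margin. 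This is indeed the same tension that prevents combining Theorem~\ref{thm:8} with any $\Delta^*(R)+2$ bound directly, since $\Delta(S)$ and $\Delta^*(R)$ need not be realized on the same face. What you have is a well-motivated research programme with the crux (a global completion argument, whether by discharging or by kernel methods on $S$) left open --- which is precisely the status the paper assigns to the statement by labelling it a conjecture. No gap to report beyond the one you yourself flagged; the statement remains unproven on both sides.
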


Observe, that Conjecture \ref{con:5} holds for regular subdivisions of simple $3$-connected plane graphs $R$ with $\Delta^*(R) \geq 16$, or $\Delta^*(R) \leq 4$, or $\delta(R) = 4$ and $\Delta^*(R) \geq 6$, or $\delta(R) = 5$, or $R$ being locally connected (see Corollary \ref{thm:9} and Theorem \ref{thm:+r}, cases $r = 1$ and $r = 2$), for subdivisions of plane triangulations, and for subdivisions of simple $3$-connected plane quadrangulations.

In the case of triangulations we have from Theorem \ref{thm:8}:
$$\chi_c(G) \leq \Delta^*(G) - 3 + t(G) + 3 + 1 = \Delta^*(G) + t(G) + 1.$$

In the case of quadrangulations Theorem  \ref{thm:8} provides:
$$\chi_c(G) \leq \Delta^*(G) - 4 + t(G) + 4 + 2 = \Delta^*(G) + t(G) + 2.$$

From Lemma \ref{le:2} we know that the upper bound on $\chi_c(G)$ in our Conjecture \ref{con:5} cannot be improved.


\bigskip
{\bf Acknowledgments}

\medskip
This work was supported by the Slovak Research and Development Agency under the Contract No. APVV-19-0153.

\end{document}